\newtheorem{lemma}{Lemma}[section]
\newtheorem{theorem}[lemma]{Theorem}
\newtheorem{proposition}[lemma]{Proposition}
\newtheorem{clm}[lemma]{Claim}
\theoremstyle{definition}
\newtheorem{definition}[lemma]{Definition}
\newtheorem{example}[lemma]{Example}
\newtheorem{observation}[lemma]{Observation}
\numberwithin{equation}{section}
\newcommand{\N}{\mathbb{N}}
\newcommand{\ann}{\mbox{\rm ann}}
\newcommand{\X}{\mathcal{X}(S)}
\newcommand{\w}{\wedge}
\newcommand{\wi}{\bigwedge_{i=1}^n}
\newcommand{\wii}{\bigwedge_{i=0}^{n-1}}
\newcommand{\Sii}{\sum_{i=0}^{n-1}}
\def\f{\varphi}
\def\c{\chi}
\newcommand{\Sub}{\mathsf{S}}
\newcommand{\Prod}{\mathsf{P}}
\let\phi=\varphi
\let\epsilon=\varepsilon
\let\hat=\widehat
\begin{document}

\title[Divisibility Theory and Distributivity]{Divisibility
  Theory of Commutative Rings\\
  and Ideal Distributivity}

\author{P. N. \'Anh}
\address[P. N. \'Anh]{R\'enyi Institute of Mathematics\\
Hungarian Academy of Sciences\\
1364 Budapest, Pf. 127\\
Hungary}
\email{anh.pham.ngoc@renyi.mta.hu}

\author{Keith A. Kearnes}
\address[Keith A. Kearnes]{Department of Mathematics\\
University of Colorado\\
Boulder, CO 80309-0395\\
USA}
\email{Kearnes@Colorado.EDU}

\author{\'Agnes Szendrei}
\address[\'Agnes Szendrei]{Department of Mathematics\\
University of Colorado\\
Boulder, CO 80309-0395\\
USA}
\email{Szendrei@Colorado.EDU}

\thanks{This material is based upon work supported by
  the National Research, Development and Innovation Office NKFIH
  K119934,
  the Vietnam Institute for Advanced Study in Mathematics (VIASM),
  the Vietnamese Institute of Mathematics,
  the National Science Foundation grant no.\ DMS 1500254,
  the Hungarian National Foundation for Scientific Research (OTKA)
  grant no.\ K115518, and
  the National Research, Development and Innovation Fund of Hungary (NKFI)
  grant no.\ K128042.}

\subjclass[2010]{Primary: 13A05; Secondary: 13A15, 08C15}
\keywords{Divisibility, relatively distributive quasivariety}

\begin{abstract}
  We begin by investigating the class of commutative unital
  rings in which no two distinct elements divide
  the same elements. We prove that this class
  forms a finitely axiomatizable,
  relatively ideal distributive quasivariety,
  and it equals the quasivariety generated by
  the class of integral domains with trivial unit group.
  We end the paper by proving a representation theorem that
  provides more evidence to the conjecture that
  B\'ezout monoids describe exactly the monoids of finitely generated
  ideals of commutative unital rings
  with distributive ideal lattice.
\end{abstract}

\maketitle

\section{Introduction}
In this paper we discuss four perspectives
for studying the divisibility theory
of commutative unital rings.

The first perspective is that of multiplicative monoids.
If $R$ is a commutative unital ring, then
in the multiplicative monoid $\langle R; \cdot\rangle$
we say that ``$a$ divides $b$'' (written $a\mid b$)
provided there exists $c$ such that $ac=b$.
This relation of divisibility is a
reflexive, transitive relation on $R$,
i.e. a quasiorder on $R$, and we call the structure $R$,
under the relation of divisibility,
the divisibility quasiorder of $R$.

The second perspective identifies two elements
of $\langle R; \cdot\rangle$
if they divide the same elements. This identification
means that we are 
passing from the divisibility quasiorder to its
quotient partial order. Since $a$ and $b$
divide the same elements exactly when the principal
ideals $(a)$ and $(b)$ are equal, 
this second perspective involves
studying the poset ${\mathcal I}_{\textrm{princ}}(R)$
of principal ideals of $R$.
(We order the elements of ${\mathcal I}_{\textrm{princ}}(R)$
by inclusion, thereby reversing the
quotient order.)

The third perspective involves enlarging the poset
${\mathcal I}_{\textrm{princ}}(R)$ to the join
semilattice ${\mathcal I}_{\textrm{f.g.}}(R)$ 
of finitely generated ideals of $R$.

The fourth perspective involves enlarging 
${\mathcal I}_{\textrm{f.g.}}(R)$ still further
to the lattice ${\mathcal I}(R)$ 
of all ideals.

The passage from $\langle R; \cdot\rangle$ to 
${\mathcal I}_{\textrm{princ}}(R)$, involving the formation of
a quotient, results in a loss
of information. One sees this clearly
by noting that when $\mathbb E$ and $\mathbb F$
are fields we must have
${\mathcal I}_{\textrm{princ}}(\mathbb E)\cong {\mathcal I}_{\textrm{princ}}(\mathbb F)$,
yet we needn't have
$\langle \mathbb E; \cdot\rangle\cong \langle \mathbb F; \cdot\rangle$.
(These latter monoids needn't even have the same size.)
Our first main goal is to investigate
the situation where the passage
from $\langle R; \cdot\rangle$ to 
${\mathcal I}_{\textrm{princ}}(R)$ results in no loss
of information. Equivalently, we examine and characterize
the situation where the divisibility quasiorder of $R$
is already a partial order. We will find that the
class of commutative unital rings whose
divisibility quasiorder is a partial order
is a relatively ideal distributive
quasivariety. Moreover, we show
that this quasivariety is axiomatized
relative to the class of commutative rings by the
single quasi-identity $xyz=z\to yz=z$.
Finally, this quasivariety is exactly
the quasivariety generated by the class of integral
domains which have trivial unit group.

Our second main objective is a contribution to the representation of
B\'ezout monoids as divisibility theories
$\mathcal{I}_{\rm princ}(R)=\mathcal{I}_{\rm f.g}(R)$
of B\'ezout rings (i.e., commutative unital rings whose finitely generated
ideals are principal).
B\'ezout monoids (see Definition~\ref{basic}) were introduced in
\cite{B3} (see also \cite{amv}) as an abstract framework for studying
divisibility in B\'ezout rings, and more generally, in arithmetical rings
(i.e., commutative unital rings $R$ with distributive ideal lattice
$\mathcal{I}(R)$).
It is known that the divisibility theory $\mathcal{I}_{\rm f.g}(R)$
of every arithmetical ring $R$ is a B\'ezout monoid~\cite{amv}, and
it is conjectured that, conversely, every B\'ezout monoid can be
represented as the divisibility theory $\mathcal{I}_{\rm f.g}(R)$
of an arithmetical ring $R$. By a result of Anderson~\cite{and},
if this is the case, then in fact, every B\'ezout monoid can be
represented as the divisibility theory
$\mathcal{I}_{\rm princ}(R)=\mathcal{I}_{\rm f.g}(R)$
of a B\'ezout ring $R$.
The conjecture has been proved for semihereditary
B\'ezout monoids~\cite{as1} and B\'ezout monoids
which have only one minimal $m$-prime filter~\cite{as2}.
In this paper we will prove the conjecture for
another class of B\'ezout monoids, which we call \emph{Vasconcelos monoids},
due to the fact that the divisibility theory of the B\'ezout ring constructed by
Vasconcelos~\cite[Example~3.2]{vas} belongs to this class.

\section{The quasivariety of
  rings whose principal ideals have\\ unique generators}
\label{s:2}

Our goal in this section is to describe the class
of commutative rings whose principal ideals have unique generators.
The main result is that this class is a relatively ideal
distributive quasivariety,
so let us explain now what that means.
(For more details about relatively congruence distributive/modular
quasivarieties,
we refer to \cite{kearnes1, kearnes2, kearnes-mckenzie}.)

A \emph{quasi-identity} in the language of commutative rings
is a universally quantified implication of the form
\[
(s_1=t_1)\wedge \cdots \wedge (s_n=t_n)\to (s_0=t_0)
\]
where $s_i$ and $t_i$ are ring terms (= ``words'', or ``polynomials'').
We allow $n=0$, in which case the quasi-identity reduces
to an identity: $s_0=t_0$ (universally quantified).
To emphasize this last point:
identities are special quasi-identities.

A \emph{variety} is a class axiomatized by identities.
A \emph{quasi-variety} is a class axiomatized by quasi-identities.
For an example of the former, the class of commutative rings
is a variety. For an example of the latter, 
the class of rings axiomatized
by the identities defining commutative rings together with the
quasi-identity $(x^2=0)\to (x=0)$
is the quasivariety of reduced commutative rings
(rings with no nonzero nilpotent elements).

If $\mathcal Q$ is a quasivariety of commutative rings,
$R\in\mathcal Q$, and $I\lhd R$ is an ideal of $R$,
then $I$ is a \emph{$\mathcal Q$-ideal} (or a \emph{relative} ideal) if
$R/I\in {\mathcal Q}$.
For example, if $\mathcal Q$ is the quasivariety of
commutative reduced rings and $R\in\mathcal Q$,
then $I$ is a relative ideal of $R$ exactly when
$I$ is a semiprime ideal of $R$.

The collection of $\mathcal Q$-ideals of some $R\in\mathcal Q$, when
ordered by inclusion, forms
an algebraic lattice. It is not a sublattice of the
ordinary ideal lattice, but it is a sub\underline{set} of the
ordinary ideal lattice that is closed under arbitrary
meet.

A quasivariety $\mathcal Q$ of commutative rings is
relatively ideal distributive if the $\mathcal Q$-ideal
lattice of any member of $\mathbb Q$ satisfies the distributive law:
\[
I\wedge (J\vee K) = (I\wedge J)\vee (I\wedge K).
\]
Here, the meet operation is just intersection ($I\wedge J = I\cap J$)
while the join operation depends on $\mathcal Q$;
all that can be said is that $I\vee J$ is the least $\mathcal Q$-ideal
that contains $I\cup J$ (or, equivalently, contains $I+J$).

It is interesting to recognize that a quasivariety of rings
is relatively ideal distributive, because any distributive
algebraic lattice is isomorphic to the lattice of open
sets of a topology defined on the set of meet irreducible
lattice elements. This means that, if $\mathcal Q$ is relatively
ideal distributive, then to each member of $\mathcal Q$
there is a naturally associated topological space, 
its $\mathcal Q$-spectrum.
It is possible to treat a member $R\in \mathcal Q$
as a ring of functions defined over the spectrum of $R$.
It turns out that the quasivariety of commutative reduced
rings, mentioned earlier as an example, is relatively
ideal distributive, and for this $\mathcal Q$
the $\mathcal Q$-spectrum of any $R\in\mathcal Q$
is just the ordinary prime spectrum of $R$.

The main result of this section is that the
class of commutative rings whose principal ideals
have unique generators is a relatively ideal distributive
quasivariety, and for which we provide an axiomatization.

\begin{theorem}
\label{ak1}
Let $\mathcal Q$ be the class of all commutative rings $R$
(with $1$) having the property that 
each principal ideal has a unique generator.
Let $\mathcal D$ be the class of domains in $\mathcal Q$.
\begin{enumerate}
\item[(1)] 
$\mathcal Q$ is a quasivariety. It is exactly the
class of rings axiomatized by the quasi-identity 
$(xyz=z)\to (yz=z)$
along with the identities defining the variety of all commutative rings.
All rings in $\mathcal Q$ have trivial unit group and are reduced.
Such rings are $\mathbb F_2$-algebras.
\item[(2)]
$\mathcal D$ is 
exactly the class of domains with trivial unit group.
\item[(3)]
  $\mathcal Q$ consist of the subrings of products of members
  of $\mathcal D$ (we write ${\mathcal Q}=\Sub\Prod({\mathcal D})$).
\item[(4)]
${\mathcal Q}$ is a relatively ideal distributive quasivariety.
\item[(5)]
The class of locally finite algebras in ${\mathcal Q}$ is 
the class of Boolean rings. This class is the largest subvariety
of $\mathcal Q$.
\end{enumerate}
\end{theorem}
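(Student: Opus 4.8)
The plan is to begin with (1) and (2) by converting the condition ``each principal ideal has a unique generator'' into the displayed quasi-identity. I would note that $(a)=(b)$ holds exactly when $a=ub$ and $b=va$ for some $u,v$, so that $a=(uv)a$; applying $xyz=z\to yz=z$ with $z=a$ then forces $va=a$, i.e.\ $b=a$. Conversely, given unique generators and $xyz=z$, the relations $z=x(yz)$ and $yz=y\cdot z$ show $(z)=(yz)$, whence $yz=z$. This proves the axiomatization, so $\mathcal Q$ is a quasivariety. The three listed consequences are then immediate: a unit $u$ gives $(u)=(1)$ and hence $u=1$; a nilpotent $a$ makes $1+a$ a unit, so $1+a=1$ and $a=0$; and $-1$ is a unit, so $-1=1$ and $2=0$. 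For (2), in a domain $(a)=(b)$ means $a$ and $b$ are associates, so membership in $\mathcal Q$ is precisely triviality of the unit group.

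The heart of the theorem is (3), and I expect the main obstacle to be isolating the right $\mathcal Q$-ideals. One inclusion is formal: $\mathcal D\subseteq\mathcal Q$ and $\mathcal Q$ is a quasivariety, whence $\Sub\Prod(\mathcal D)\subseteq\mathcal Q$. For the reverse inclusion I would prove that every relatively subdirectly irreducible member of $\mathcal Q$ is a domain and then apply the relative subdirect decomposition theorem for quasivarieties. The key lemma, which I regard as the crux of the whole argument, is that for each $a\in R\in\mathcal Q$ the annihilator $\ann(a)$ is a $\mathcal Q$-ideal: the hypothesis $z(xy-1)\in\ann(a)$ says $(xy)(za)=za$, and applying the defining quasi-identity to the single element $za$ gives $y(za)=za$, i.e.\ $z(y-1)\in\ann(a)$. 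Since $\mathcal Q$-ideals are closed under arbitrary intersection, $\ann(S)=\bigcap_{s\in S}\ann(s)$ is then a $\mathcal Q$-ideal for every subset $S$.

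Granting this, suppose $R\in\mathcal Q$ is relatively subdirectly irreducible yet has $ab=0$ with $a,b\neq 0$. Then $\ann(a)$ and $\ann(\ann(a))$ are nonzero $\mathcal Q$-ideals, since they contain $b$ and $a$ respectively, while their intersection is $0$ because any $x$ lying in both satisfies $x^2=0$ and $R$ is reduced. Two nonzero $\mathcal Q$-ideals meeting in $0$ contradict the existence of a smallest nonzero $\mathcal Q$-ideal, so $R$ must be a domain, hence in $\mathcal D$ by (2). This yields $\mathcal Q\subseteq\Sub\Prod(\mathcal D)$ and completes (3).

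For (4) I would apply (3) to each quotient $R/I$ to conclude that every $\mathcal Q$-ideal $I$ is the intersection of the prime $\mathcal Q$-ideals containing it. The assignment $I\mapsto V(I):=\{P\colon P\text{ a prime }\mathcal Q\text{-ideal},\ I\subseteq P\}$ is then injective and converts the operations dually, with $V(I\vee J)=V(I)\cap V(J)$ and, using $P\supseteq I\cap J\supseteq IJ$ together with primality, $V(I\cap J)=V(I)\cup V(J)$. Thus the $\mathcal Q$-ideal lattice anti-embeds into a power set lattice and is therefore distributive. For (5), Boolean rings satisfy $yz=y(xyz)=xy^2z=xyz=z$, so they lie in $\mathcal Q$ and form a locally finite subvariety; conversely, in a locally finite $R\in\mathcal Q$ every one-generated subring is finite, reduced, and in $\mathcal Q$, hence a finite product of copies of $\mathbb F_2$, which forces $x^2=x$, so $R$ is Boolean. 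Finally, for any subvariety $\mathcal V\subseteq\mathcal Q$ the free algebra $F_{\mathcal V}(t)$ is a quotient of $\mathbb F_2[t]$; were it all of $\mathbb F_2[t]$, then $\mathcal V$ would contain the quotient $\mathbb F_4\notin\mathcal Q$, a contradiction, so $F_{\mathcal V}(t)$ is finite, hence Boolean, and $x^2=x$ is an identity of $\mathcal V$. Therefore the variety of Boolean rings is the largest subvariety of $\mathcal Q$.
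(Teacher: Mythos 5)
Your proposal is correct, and for items (1)--(3) it follows essentially the paper's own path: the same translation between unique generators and the quasi-identity, the same key lemma that annihilators are $\mathcal Q$-ideals (you prove it for $\ann(a)$ and intersect, the paper does $\ann(S)$ directly), and the same step that a non-domain has two disjoint nonzero $\mathcal Q$-ideals $\ann(a)$ and $\ann(\ann(a))$, hence is not relatively subdirectly irreducible, followed by the relative Birkhoff decomposition. Where you genuinely diverge is item (4): the paper invokes the relative commutator machinery of Kearnes--McKenzie (quasi-Day terms, the weak extension principle, absence of abelian congruences), whereas you derive distributivity directly from (3) by writing each $\mathcal Q$-ideal $I$ of $R$ as an intersection of prime $\mathcal Q$-ideals (pull back the kernels of the composites $R/I\to \prod D_\alpha\to D_\alpha$, which are prime since each $D_\alpha$ is a domain and are relative ideals since $\mathcal Q$ is closed under subrings), and then anti-embedding the $\mathcal Q$-ideal lattice into a power set via $I\mapsto V(I)$, using that $V(I\vee J)=V(I)\cap V(J)$ holds because joins are relative joins, and $V(I\cap J)=V(I)\cup V(J)$ holds by primality applied to $IJ\subseteq I\cap J$. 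This argument is sound (the edge case $V(R)=\emptyset$ causes no trouble) and it buys self-containedness plus an explicit construction of the $\mathcal Q$-spectrum, which the paper only alludes to in its introductory remarks; the paper's citation-based route is shorter on the page and is reused verbatim for Theorem~\ref{ak2}(2), though your spectral argument would transfer there just as well since the analogue of (3) is established there too. In (5) you also replace the paper's closing argument: the paper shows directly that any non-Boolean $R\in\mathcal Q$ has the homomorphic image $R/\langle r^2\rangle$ containing a nonzero nilpotent, hence outside $\mathcal Q$, while you argue through the free algebra $F_{\mathcal V}(t)$, a quotient of $\mathbb F_2[t]$ which cannot be all of $\mathbb F_2[t]$ (else $\mathbb F_4\in\mathcal V\subseteq\mathcal Q$), hence is finite, hence Boolean, forcing the identity $x^2=x$ in $\mathcal V$. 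Both are valid; the paper's version isolates the pointwise fact that each non-Boolean member of $\mathcal Q$ already generates a variety escaping $\mathcal Q$, while yours is a clean uniform syntactic argument.
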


\begin{proof}
  We argue the first two claims of Item (1) together.
  Namely, we show that $R\in\mathcal Q$ if and only if
  $R$ belongs to the quasivariety of commutative rings
  satisfying $(xyz=z)\to (yz=z)$.

For the ``if'' part, let $R$ be a commutative ring satisfying 
$(xyz=z)\to (yz=z)$. Choose $r\in R$ and assume
that $(r)=(s)$ for some $s$. Then $s=qr$ and $r=ps$
for some $p, q\in R$. Since $pqr=r$, the quasi-identity
yields $qr=r$, or $s=r$. Thus, $(r)=(s)$ implies $r=s$,
showing that $R$ satisfies the unique generator
property for principal ideals.
Conversely, for ``only if'', suppose that
$R$ does not satisfy $(xyz=z)\to (yz=z)$.
$R$ must have elements $p, q, r$ such that $pqr=r$ and $qr\neq r$,
Then $(r)=(qr)$ and $qr\neq r$, so $R$ does not have the unique generator
property.

For the second to last statement of Item (1), suppose that
$R\in {\mathcal Q}$ and that $u$ is a unit in $R$.
Then $(u)= R = (1)$, so by the unique
generator property $u=1$. Also, to see that $R$ is reduced,
assume that $n\in R$ satisfies $n^2=0$. Then
$1+n$ is a unit (with inverse $1-n$), so $1+n=1$,
so $n=0$.

For the final statement of Item (1), the fact that any 
$R\in{\mathcal Q}$ is an $\mathbb F_2$-algebra follows from the fact
that $-1$ is a unit, so $1=-1$. Then the prime subring
of $R$ is isomorphic to $\mathbb F_2$, which is enough
to establish that $R$ is an $\mathbb F_2$-algebra.

For Item (2), if 
$D\in {\mathcal D}$, then $D$ is a domain 
by definition, and it has trivial unit group by Item (1).
Conversely, suppose that $D$ is a domain with trivial unit
group. If $(a)=(b)$ in $D$, then $a$ and $b$ must differ
by a unit, hence $a=b$, showing that $D$ has the 
unique generator property, so $D$ is a domain
in $\mathcal Q$, yielding $D\in\mathcal D$.

In order to establish Item (3) we first prove a claim.

\begin{clm}\label{basic_claim}
If $R\in {\mathcal Q}$ and $S\subseteq R$ is a subset, 
then the annihilator $A=\ann(S)$ is a 
$\mathcal Q$-ideal (meaning that 
$R/A\in {\mathcal Q}$).
\end{clm}

{\it Proof of claim.}
For this we
must verify that $R/A$ satisfies the quasi-identity
$(xyz=z)\to (yz=z)$. Equivalently, we must show 
that if $x, y, z\in R$ and
$xyz\equiv z\pmod{A}$, then
$yz\equiv z\pmod{A}$. We begin:
If $xyz\equiv z\pmod{A}$, then
$(xyz-z)\in A$, so $(xyz-z)s =0$
for any $s\in S$.
This means that $xy(zs)=(zs)$ for any $s\in S$.
Applying the quasi-identity from
Item (1) with $zs$ in place of $z$ we 
derive that $yzs=zs$, or $(yz-z)s=0$ for any $r\in I$.
Hence $yz\equiv z\pmod{A}$, as desired.

\medskip

Next we argue that if $R\in{\mathcal Q}$ is not a domain, then
$R$ has disjoint nonzero $\mathcal Q$-ideals $I$ and $J$.
If $R$ is not a domain, then there exist nonzero $r$
and $s$ such that $rs=0$. Take $I=\ann(r)$ 
and $J = \ann(I)$. $I$ is nonzero since it contains
$s$, and $J$ is nonzero since it contains $r$.
Both $I$ and $J$ are $\mathcal Q$-ideals
by Claim~\ref{basic_claim}.
If $t\in I\cap J$, then $t^2\in IJ = \{0\}$,
so $t$ is nilpotent. According to Item (1),
any $R\in{\mathcal Q}$ is reduced, so $t=0$. Thus 
$I$ and $J$ are indeed disjoint nonzero 
$\mathcal Q$-ideals.

The argument for Item (3) is completed by noting that 
any quasi-variety $\mathcal Q$ is expressible as
$\Sub\Prod(\mathcal K)$ where $\mathcal K$
is the subclass of relatively subdirectly irreducible
members of $\mathcal Q$.
This is a version of Birkhoff's Subdirect Representation
Theorem, stated for quasivarieties, and it holds for quasivarieties
because relative ideal/congruence lattices are algebraic.
The previous paragraph shows that the only members
of ${\mathcal Q}$ that could possibly be relatively 
subdirectly irreducible are the domains.
(That is, 
$R$ not a domain $\Rightarrow$ 
$R$ has disjoint nonzero $\mathcal Q$-ideals
$\Rightarrow$ 
$R$ is not relatively subdirectly irreducible.)

To prove Item (4), we refer to general criteria from
\cite{kearnes-mckenzie} for proving that a quasivariety
is relatively congruence distributive. Specifically
we will use Theorems 4.1 and 4.3 of that paper,
along with some of the remarks between those theorems.

Here is a summary of what we are citing.
From Theorem 4.1 of \cite{kearnes-mckenzie},
a quasivariety is relatively congruence modular
if and only if it satisfies the ``extension principle''
and the ``relative shifting lemma''.
From remarks following the proof of Theorem 4.1,
the ``extension principle'' can be replaced by
the ``weak extension principle''. From Theorem 2.1 of that paper,
the ``relative shifting lemma'' can be replaced by the
``existence of quasi-Day terms''.
Finally, from Theorem 4.3 of that paper, a quasivariety
is relatively congruence distributive if and only if
it is relatively congruence distributive and
no member has a nonzero abelian congruence.

What this reduces to in our setting is this:
to prove that our quasivariety $\mathcal Q$ is relatively
ideal distributive (Item (4)) it suffices to show that $\mathcal Q$ 
(i) has ``quasi-Day terms'', 
(ii) satisfies the ``weak extension principle'', and 
(iii) has no member with a nontrivial
abelian congruence (i.e., a nonzero ideal $A$
satisfying $A^2=0$).

Condition (i) holds since $\mathcal Q$
has ordinary Day terms, in fact a Maltsev term.
(More explicitly, the singleton set
$\Sigma_s:=\{(p(w,x,y,z),q(w,x,y,z)\}$
where $p(w,x,y,z):=w-x+y$ and $q(w,x,y,z):=z$
meets the defining conditions from  Theorem 2.1(2) 
of \cite{kearnes-mckenzie} for ``quasi-Day terms''.)

Condition (iii) holds since if $A\lhd R\in {\mathcal Q}$
and $A^2=0$, then the elements of $A$ are nilpotent.
As argued in the proof of Item (1),
the only nilpotent element in $R$ is $0$,
hence $A=0$.

Condition (ii) means that if $R\in {\mathcal Q}$
has disjoint ideals $I$ and $J$, then $I$ and $J$ can be extended
to $\mathcal Q$-ideals $\overline{I}\supseteq I$ and
$\overline{J}\supseteq J$ that are also disjoint. 
To prove that Condition (ii) holds we modify
an argument from above:
If $R$ has ideals $I$ and $J$ such that $I\cap J=0$,
then $IJ=0$. 
The ${\mathcal Q}$-ideal $\overline{J}=\ann(I)$ contains $J$,
the ${\mathcal Q}$-ideal $\overline{I}=\ann(\overline{J})$ 
contains $I$, both
are $\mathcal Q$-congruences, and $\overline{I}\cap \overline{J} = 0$
(since the elements in this intersection 
square to zero and $R$ is reduced).
This shows that disjoint ideals $I$ and $J$
may be extended to disjoint $\mathcal Q$-ideals.

For Item (5), to show that a locally finite ring in $\mathcal Q$ 
is a Boolean ring it suffices to show that any finite
ring $F\in \mathcal Q$ is Boolean. (The reason this
reduction is permitted is that
the property of being a Boolean ring is expressible
by the identity $x^2=x$, and a locally finite
structure satisfies a universal sentence
if and only if its finite substructures satisfy the sentence.)

So choose a finite $F\in \mathcal Q$.
As $F$ has trivial unit group, and 
$1+{\rm rad}(F)\subseteq U(F)$, we get that $F$ must be semiprimitive.
Since $F$ is finite it must be a product of fields.
Since $F$ has only trivial units, each factor field 
must have size $2$, so $F$ is Boolean.

Conversely, if $B$ is any Boolean ring,
then multiplication is a semilattice operation,
so $xyz\leq yz\leq z$ in the semilattice order
for any $x, y, z\in B$.
If, in $B$, we have first = last ($xyz=z$), then we must
have middle = last ($yz=z$). Hence $B\in{\mathcal Q}$.

To complete the proof of Item (5)
we must show that if $\mathcal V$ is a variety and
${\mathcal V}\subseteq {\mathcal Q}$, then $\mathcal V$
consists of Boolean rings. For this it suffices to
show that if $R\in {\mathcal Q}$ is not Boolean
(i.e., $R$ has an element $r$
satisfying $r\neq r^2$), then $R\notin {\mathcal V}$.
This holds because $\langle r^2\rangle\subsetneq \langle r\rangle$
by the unique generator property, so 
$r/\langle r^2\rangle$ is a nonzero nilpotent element of
$R/\langle r^2\rangle$, establishing that some homomorphic
image of $R$ is not in $\mathcal Q$.
\end{proof}

By substituting $z=1$ in the quasi-identity
$(xyz=z)\to (yz=z)$ we obtain the consequence $(xy=1)\to (y=1)$,
which expresses that the unit group is trivial.
Since a consequence can be no stronger than the original statement,
this is enough to deduce that the quasivariety
of commutative rings with trivial unit group
contains the quasivariety of commutative rings
whose principal ideals have unique generators.
This containment is proper, and the following example
describes a commutative ring satisfying
$(xy=1)\to (y=1)$ but not
$(xyz=z)\to (yz=z)$.

\begin{example}\label{triv}
  Let $R$ be the commutative $\mathbb F_2$-algebra presented by
  \[
  \langle X, Y, Z\mid XYZ=Z\rangle.\]
  That is,
  $R$ is the quotient of the polynomial ring
  $\mathbb F_2[X, Y, Z]$ by the ideal $(XYZ-Z)$.
  
  We may view the relation $XYZ-Z=0$ as a reduction rule
  $XYZ\to Z$ to produce a normal form for elements of $R$.
  This single rule is applied as follows: choose 
  a monomial of the form $XYZW$ ($W$ is a
  product of variables)
  of an element in a coset of $(XYZ-Z)\subseteq \mathbb F_2[X,Y,Z]$
  and replace $XYZW$ by $ZW$ 
  That is, if each of $X, Y, Z$ appear in
  a monomial, we delete one instance of $X$ and one instance of $Y$
  from that monomial.

  The Diamond Lemma applies to show that there is a
  normal form for elements of $R$, and the elements in normal
  form are exactly the polynomials over $\mathbb F_2$
  in the generators $X, Y, Z$ where no monomial is divisible
  by each of $X, Y$, and $Z$.

  Note that each application of the reduction rule reduces the
  $X$-degree and the $Y$-degree of some monomial, but does
  not alter the $Z$-degree of any monomial. This is enough to prove
  that the unit group of $R$ is trivial. For if $R$ had a unit
  $u$ with inverse $v$, then the $Z$-degree of the product $uv=1$
  is zero, but it is also
  the sum of the $Z$-degrees of $u$ and $v$.
  Hence the normal form of a unit must be $Z$-free. But then $u$
  and $v$ would then be inverse units in the subring
  $\mathbb F_2[X,Y]$, where all elements are in normal form.
  Now one can argue in this subring, using $X$-degree and $Y$-degree,
  to conclude that none of $X, Y, Z$ appear in the normal form
  of a unit. We are left with $u=v=1$ as the only possibility.

  Notice also that $YZ-Z$ is in normal form, so $YZ-Z\neq 0$
  in $R$. This shows that $R$ fails to satisfy
  $(xyz=z)\to (yz=z)$,
  but does satisfy $(xy=1)\to (y=1)$.
  In particular, the fact that $XYZ=Z$ while $YZ\neq Z$
  means that $(YZ)=(Z)$, while $YZ\neq Z$,
  so the principal ideal $(Z)$ does not have a unique generator.
\end{example}

\section{Some related quasivarieties}
\label{s:3}
Recall from the introduction that one of our goals
was to identify the situation when the passage
from the monoid $\langle R; \cdot\rangle$ to the poset of principal
ideals results in no loss of information. This led
to the investigation of rings whose principal ideals
have unique generators. For these rings we progressed from
(1) to (2) to (3) on this list of equivalent conditions:
\begin{enumerate}
  \item[(1)] Principal ideals of $R$ have unique generators.
  \item[(2)] $R$ satisfies $(xyz=z)\to (yz=z)$.
  \item[(3)] $R$ belongs to the quasivariety generated by
    domains with trivial unit group, which is the class
    of domains satisfying $(xy=1)\to (y=1)$.
\end{enumerate}
If we had progressed in the reverse direction, taking
the property of ``trivial unit group''
as the object of interest, the results would have been different.
The quasivariety of commutative rings with trivial unit
group is not generated by its subclass of domains, and it
is not relatively ideal distributive.

Notice also that progressing in the reverse direction
and trying to obtain the same results 
requires one to strengthen $(xy=1)\to (y=1)$ to
$(xyz=z)\to (yz=z)$.

In this section, we \emph{will} progress in the reverse direction
starting with a generalization of the quasi-identity
$(xy=1)\to (y=1)$. Namely, we will investigate
the quasivariety of commutative rings generated by 
domains satisfying $(xy=1)\to (y^n=1)$, which asserts
that every unit has exponent dividing $n$.
(The previous section was entirely concerned with the case $n=1$.)

If $D$ is a domain satisfying $(xy=1)\to (y^n=1)$, then
its group of units is a cyclic group of order dividing $n$.
The reason for this is that unit group $U(D)$
is an abelian group satisfying $x^n=1$, hence $U(D)$ is
a locally finite abelian group. If $U(D)$ is not cyclic, then
it contains a finite noncyclic subgroup $G\subseteq U(D)$.
But now $G$ is a finite noncyclic subgroup of the field of fractions
of $D$, and we all know that the multiplicative group
of a field contains no finite noncyclic subgroup.

So let ${\mathcal Q}_{|n}$ denote the quasivariety
of commutative rings generated by the domains
$D$ for which $U(D)$ is cyclic of order dividing $n$.
We shall show that this quasivariety is relatively ideal distributive
and we shall provide an axiomatization for 
${\mathcal Q}_{|n}$.

Write ${\mathcal D}_{|n}$ for the class of domains
whose unit group is cyclic of order dividing $n$.

\begin{theorem}
  \label{ak2}
  By definition, we have that ${\mathcal Q}_{|n}$ is the quasivariety
  generated by ${\mathcal D}_{|n}$.
\begin{enumerate}
\item[(1)] 
  ${\mathcal Q}_{|n}$ is axiomatized
  by
\begin{enumerate}
\item[(a)] the identities defining commutative rings,
\item[(b)] the quasi-identity $(x^2=0)\to (x=0)$,
  which expresses that the only nilpotent element is $0$, and
\item[(c)] the quasi-identity 
  $(xyz=z)\to(y^nz=z)$.
\end{enumerate}
\item[(2)]
${\mathcal Q}_{|n}$ is a relatively ideal distributive quasivariety.
\end{enumerate}
\end{theorem}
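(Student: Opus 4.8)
The plan is to prove part~(1) by showing that the class $\mathcal{A}$ axiomatized by (a)--(c) coincides with ${\mathcal Q}_{|n}$, following closely the template of the proof of Theorem~\ref{ak1}. Since $\mathcal{A}$ is defined by quasi-identities it is a quasivariety, so one inclusion is immediate once I verify that every $D\in{\mathcal D}_{|n}$ satisfies (a)--(c): then $\mathcal{A}$ is a quasivariety containing ${\mathcal D}_{|n}$, hence contains the quasivariety ${\mathcal Q}_{|n}$ it generates. Axioms (a) and (b) hold in any domain, and for (c) I would argue that if $xyz=z$ in $D$ then either $z=0$, making $y^nz=z$ trivial, or $z$ cancels to give $xy=1$; in the latter case $y$ is a unit, so $y^n=1$ because $U(D)$ has order dividing $n$, whence $y^nz=z$.

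For the reverse inclusion $\mathcal{A}\subseteq{\mathcal Q}_{|n}$ I would invoke the quasivariety form of Birkhoff's subdirect representation theorem, as in the proof of Item~(3) of Theorem~\ref{ak1}, and reduce to showing that every relatively subdirectly irreducible member of $\mathcal{A}$ lies in ${\mathcal D}_{|n}$. The first step is the analogue of Claim~\ref{basic_claim}: for $R\in\mathcal{A}$ and $S\subseteq R$, the annihilator $A=\ann(S)$ is an $\mathcal{A}$-ideal. Reducedness of $R/A$ follows from that of $R$, since $x^2\in A$ forces $(xs)^2=x^2s^2=0$ and hence $xs=0$ for all $s\in S$; and (c) passes to $R/A$ by the same localization trick as before, namely from $(xyz-z)s=0$ one reads $xy(zs)=zs$, applies (c) in $R$ with $zs$ in place of $z$ to get $y^n(zs)=zs$, and concludes $(y^nz-z)s=0$. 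With this claim, the argument that a non-domain $R\in\mathcal{A}$ carries disjoint nonzero $\mathcal{A}$-ideals $I=\ann(r)$ and $J=\ann(I)$ (with $I\cap J=0$ by reducedness) is verbatim from Theorem~\ref{ak1}, so every relatively subdirectly irreducible member of $\mathcal{A}$ is a domain. Setting $z=1$ in (c) shows such a domain satisfies $(xy=1)\to(y^n=1)$, so by the discussion preceding the theorem its unit group is cyclic of order dividing $n$; that is, it lies in ${\mathcal D}_{|n}$. Thus $\mathcal{A}\subseteq\Sub\Prod({\mathcal D}_{|n})\subseteq{\mathcal Q}_{|n}$, completing part~(1).

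For part~(2) I would again cite Theorems~2.1, 4.1, and 4.3 of \cite{kearnes-mckenzie} and verify the same three conditions used in Item~(4) of Theorem~\ref{ak1}. Condition~(i), quasi-Day terms, holds because commutative rings have a Maltsev term, with the pair $p(w,x,y,z)=w-x+y$, $q(w,x,y,z)=z$ as before. Condition~(iii), no nonzero abelian congruence, holds because a nonzero ideal $A$ with $A^2=0$ would consist of nilpotents, contradicting axiom~(b). Condition~(ii), the weak extension principle, goes through exactly as before: disjoint ideals $I,J$ satisfy $IJ=0$, so $\overline{J}=\ann(I)$ and $\overline{I}=\ann(\overline{J})$ are $\mathcal{A}$-ideals extending $J$ and $I$, and $\overline{I}\cap\overline{J}=0$ because its elements square to zero and $R$ is reduced.

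The one genuinely new point, and the step I would flag as the main obstacle, is the role of axiom~(b). When $n=1$ reducedness came for free from the divisibility quasi-identity, but the exponent $n$ breaks this: a putative nilpotent $m$ with $m^2=0$ gives $(1+m)^n=1+nm$, which need not equal $1$ when the characteristic divides $n$ (for instance $\mathbb{F}_2[m]/(m^2)$ with $n=2$ satisfies (a) and (c) yet is not reduced). Hence (c) alone does not force reducedness, so (b) must be posited separately, and it is precisely reducedness that drives every disjointness computation above. Once one sees that (b) supplies exactly what the divisibility identity supplied automatically when $n=1$, the rest of the proof transfers from Theorem~\ref{ak1} with only the bookkeeping of carrying the exponent $n$ through the annihilator argument.
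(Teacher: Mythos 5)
Your proposal is correct and follows essentially the same route as the paper's proof: the same annihilator claim (including the $(xs)^2=(x^2s)s$ trick for axiom (b)), the same reduction via relative subdirect irreducibility to domains, the same cancellation of $z$ to land in ${\mathcal D}_{|n}$, and the same three Kearnes--McKenzie conditions for relative distributivity. Your closing observation that (b) is independent of (a) and (c) --- witnessed by $\mathbb{F}_2[m]/(m^2)$ with $n=2$ --- is a nice addition not made explicit in the paper, but it does not alter the argument.
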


\begin{proof}
  To prove Item (1),
  let $\mathcal K$ be the quasivariety axiomatized by
  the sentences in (a), (b), and (c).
  It is easy to see that 
  ${\mathcal D}_{|n}$ satisfies the quasi-identities in (a), (b) and (c), so
  ${\mathcal D}_{|n}\subseteq {\mathcal K}$, and therefore 
  ${\mathcal Q}_{|n}\subseteq {\mathcal K}$.

  Conversely, we must show that ${\mathcal K}\subseteq {\mathcal Q}_{|n}$
  For this, we need the analogue of Claim~\ref{basic_claim}
  for ${\mathcal K}$:

\begin{clm}\label{basic_claim_2}  
  If $R\in {\mathcal K}$
  and $S\subseteq R$ is a subset, 
then the annihilator $A=\ann(S)$ is a 
${\mathcal K}$-ideal.
\end{clm}

{\it Proof of claim.}
Our goal is to prove that $R/A\in{\mathcal K}$,
so we must prove that $R/A$ is a commutative
ring satisfying $(x^2=0)\to (x=0)$ and 
$(xyz=z)\to (y^nz=z)$.
It is clear that $R/A$ is a commutative ring
(identities are preserved under quotients),
so we only need to verify that $R/A$ satisfies 
$(x^2=0)\to (x=0)$ and 
$(xyz=z)\to (y^nz=z)$.
For the second of these, the proof
is exactly like the proof of Claim~\ref{basic_claim},
while for the first there is an extra idea.
We prove the first only.

To prove that $R/A$ satisfies $(x^2=0)\to (x=0)$,
we must show that $R$ satisfies
$x^2\equiv 0\pmod{A}$ implies 
$x\equiv 0\pmod{A}$.
If $x^2\equiv 0\pmod{A}$, or $x^2\in A$, then
$x^2s=0$ for all $s\in S$.
This implies $(xs)^2=(x^2s)s=0$
for all $s\in S$. (This is the ``extra idea''.)
But $R$ satisfies $(x^2=0)\to (x=0)$,
so from $(xs)^2=0$ we deduce $xs=0$
for all $s\in S$. This proves that $x\in A$
or $x\equiv 0\pmod{A}$.

\medskip

We will use Claim~\ref{basic_claim_2}
the same way we used
Claim~\ref{basic_claim} in the proof of Theorem~\ref{ak1}.
If $R\in\mathcal K$ is not a domain, then
there exist nonzero $r$
and $s$ such that $rs=0$.
Take $I=\ann(r)$ 
and $J = \ann(I)$. $I$ is nonzero since it contains
$s$, and $J$ is nonzero since it contains $r$.
By Claim~\ref{basic_claim_2}, 
$I$ and $J$ are $\mathcal K$-ideals.
Any element in $I\cap J$ must square to zero,
so since $\mathcal K$ satisfies axiom (b)
we get $I\cap J=\{0\}$.
Thus, if $R$ is not a domain, then it has
a pair of nonzero, disjoint, ${\mathcal K}$-ideals.
This is enough to guarantee that $R$ is not
subdirectly irreducible relative to $\mathcal K$.

In the contrapositive form, we have shown that
any relatively subdirectly irreducible member
of $\mathcal K$ is a domain. Hence
${\mathcal K}$ is generated by its subclass
of domains.

But if $D\in{\mathcal K}$ is a domain, then
by (1)(c) we derive that $D$ satisfies 
$(xyz=z)\to (y^nz=z)$. For domains we can cancel $z$
to obtain that $D$ satisfies $(xy=1)\to (y^n=1)$.
This shows that the domains in ${\mathcal K}$
lie in ${\mathcal D}_{|n}$, so ${\mathcal K}$
is contained the quasivariety generated by
${\mathcal D}_{|n}$, which is ${\mathcal Q}_{|n}$.

Item (2) of this theorem is proved exactly like
Item (4) of Theorem~\ref{ak1}.
\end{proof}

\begin{observation}\label{obs}
A quick test to rule out that some nonzero ring $R$ belongs to some
  quasivariety $\mathcal Q_{|n}$ is to show that
  the prime subring of $R$ does not belong to $\mathcal Q_{|n}$.
  Since the prime subring of $R$
  is isomorphic to $\mathbb Z_k$ for some $k>1$,
  and since the units of $\mathbb Z_k$ are
  easy to calculate, it is not hard to derive some consequences.

  In fact, it is not hard to write down the exact relationship
  between $k$ and $n$ which guarantees that
  $\mathbb Z_k\notin {\mathcal Q}_{|n}$, but here we only mention that
  if $n$ is odd, then $\mathbb Z_k\notin {\mathcal Q}_{|n}$
  unless $k=2$. Also, $\mathbb Z\notin {\mathcal Q}_{|n}$.

  To see this, note that if $R=\mathbb Z_k$ for $k\neq 2$
  or $R=\mathbb Z$, then $1\neq -1$, 
  so $U(R)$ contains an element $-1$ of order $2$.
  But if $n$ is odd, then the rings in ${\mathbb Q}_{|n}$
  have unit groups of odd exponent, hence do not have units
  of order $2$.

  In particular, this shows that if $n$ is odd, then
  each ring in ${\mathbb Q}_{|n}$ may be thought of
  as a $\mathbb F_2$-algebra.
\end{observation}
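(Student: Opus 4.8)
The plan is to exploit the fact that $\mathcal Q_{|n}$, being a quasivariety, is closed under subalgebras, together with the fact that the prime subring of any unital commutative ring $R$ is a subalgebra of $R$ in the language of commutative rings. First I would record the test itself: the prime subring is the subring generated by $1$, hence is isomorphic to $\mathbb Z$ (in characteristic $0$) or to $\mathbb Z_k$ with $k>1$ (in positive characteristic); if $R\in\mathcal Q_{|n}$ then this subalgebra must also lie in $\mathcal Q_{|n}$, so failure of the prime subring to belong to $\mathcal Q_{|n}$ rules out membership of $R$.

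Next I would extract a usable consequence of the axiomatization in Theorem~\ref{ak2}. Substituting $z=1$ into quasi-identity~(1)(c), $(xyz=z)\to(y^nz=z)$, yields the weaker quasi-identity $(xy=1)\to(y^n=1)$, valid throughout $\mathcal Q_{|n}$, which says that every unit $u$ satisfies $u^n=1$. Thus the exponent of the unit group $U(R)$ of any $R\in\mathcal Q_{|n}$ divides $n$; when $n$ is odd, $U(R)$ has odd exponent and so contains no element of order $2$.

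The core step is then to exhibit such an element of order $2$ in the offending prime subrings. In $\mathbb Z$, and in $\mathbb Z_k$ for every $k\neq 2$, we have $1\neq -1$, so $-1$ is a unit of order exactly $2$; since $(-1)^n=-1\neq 1$ for odd $n$, each of these rings violates $(xy=1)\to(y^n=1)$ and therefore lies outside $\mathcal Q_{|n}$. Feeding this into the prime subring test delivers both conclusions simultaneously: for odd $n$ the prime subring of a nonzero $R\in\mathcal Q_{|n}$ can be neither $\mathbb Z$ nor $\mathbb Z_k$ with $k\neq 2$, so it must be $\mathbb Z_2\cong\mathbb F_2$, whence $R$ is an $\mathbb F_2$-algebra.

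Since every step is either a substitution into an axiom or an elementary order computation in $U(\mathbb Z_k)$, I do not expect a real obstacle; the only point needing mild care is the justification of the test, namely the closure of quasivarieties under subalgebras and the remark that the prime subring is a subalgebra. I would further note that the same bookkeeping---tracking the order of $-1$, and more generally the group structure of $U(\mathbb Z_k)$, against the divisibility condition imposed by $n$---produces the promised exact relationship between $k$ and $n$ governing when $\mathbb Z_k\in\mathcal Q_{|n}$, though only the odd-$n$ case is required for the $\mathbb F_2$-algebra statement.
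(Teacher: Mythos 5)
Your proposal is correct and follows essentially the same route as the paper: both arguments observe that $-1$ is a unit of order $2$ in $\mathbb Z$ and in $\mathbb Z_k$ for $k\neq 2$, while for odd $n$ the quasi-identity $(xy=1)\to(y^n=1)$ (obtained by setting $z=1$ in axiom (1)(c) of Theorem~\ref{ak2}) forces unit groups in $\mathcal Q_{|n}$ to have odd exponent, so the prime subring of any nonzero member must be $\mathbb Z_2\cong\mathbb F_2$. Your write-up merely makes explicit two points the paper leaves implicit --- that quasivarieties are closed under subalgebras (justifying the prime-subring test) and that the exponent bound on units comes from substituting into the axiomatization --- which is a faithful elaboration rather than a different proof.
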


We can use this observation to show that not all the quasivarieties
${\mathcal Q}_{|n}$ are distinct, in particular

\begin{theorem}
  If $p$ is an odd prime, then ${\mathcal Q}_{|p}={\mathcal Q}_{|1}$
  unless $p$ is a Mersenne prime.
\end{theorem}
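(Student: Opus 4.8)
The plan is to prove the equivalent statement that if $p$ is an odd prime which is \emph{not} a Mersenne prime, then ${\mathcal D}_{|p} = {\mathcal D}_{|1}$; the desired equality ${\mathcal Q}_{|p} = {\mathcal Q}_{|1}$ then follows at once, since each ${\mathcal Q}_{|n}$ is by definition the quasivariety generated by ${\mathcal D}_{|n}$, and equal generating classes generate equal quasivarieties. The inclusion ${\mathcal D}_{|1} \subseteq {\mathcal D}_{|p}$ (hence ${\mathcal Q}_{|1} \subseteq {\mathcal Q}_{|p}$) is automatic, because a trivial unit group is cyclic of order $1$, which divides $p$. Thus the entire content is to rule out the only other possibility allowed by membership in ${\mathcal D}_{|p}$: a domain $D$ whose unit group $U(D)$ is cyclic of order exactly $p$.

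First I would pin down the characteristic. If $|U(D)| = p$ with $p$ odd, then $-1 \in U(D)$ has order dividing both $2$ and $p$, hence order $1$; so $-1 = 1$ and $D$ has characteristic $2$, i.e.\ $D$ is an $\mathbb F_2$-algebra (this is the same mechanism recorded in Observation~\ref{obs}). Next, choosing a generator $\zeta$ of $U(D)$, I would observe that $\zeta$ is a primitive $p$-th root of unity and that the subring $\mathbb F_2[\zeta] \subseteq D$ is a finite integral domain, hence a finite field $\mathbb F_{2^d}$. Here $d = [\mathbb F_2(\zeta):\mathbb F_2]$ is precisely the multiplicative order of $2$ modulo $p$ (the standard fact that the minimal polynomial over $\mathbb F_2$ of a primitive $p$-th root of unity has degree $\mathrm{ord}_p(2)$), and since $2 \not\equiv 1 \pmod{p}$ we have $d \geq 2$.

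The conclusion then follows from a subgroup count. The unit group $U(\mathbb F_{2^d})$ is cyclic of order $2^d - 1$ and sits inside $U(D) \cong \mathbb Z_p$. Because $p$ is prime, this subgroup is either trivial or all of $U(D)$; but $2^d - 1 \geq 3 > 1$ rules out the trivial case, forcing $2^d - 1 = p$. Thus $p = 2^d - 1$ is a Mersenne prime, contradicting the hypothesis. Therefore no domain in ${\mathcal D}_{|p}$ can have unit group of order $p$, every member of ${\mathcal D}_{|p}$ has trivial unit group, and ${\mathcal D}_{|p} = {\mathcal D}_{|1}$, as required.

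The one step demanding genuine care—the crux of the argument—is the identification $\mathbb F_2[\zeta] = \mathbb F_{2^d}$ with $d = \mathrm{ord}_p(2)$; everything else is bookkeeping. I would also point out that the result is sharp: when $p = 2^d - 1$ \emph{is} a Mersenne prime, the field $\mathbb F_{2^d}$ itself lies in ${\mathcal D}_{|p}\setminus{\mathcal D}_{|1}$ and violates $(xy=1)\to(y=1)$, so $\mathbb F_{2^d} \in {\mathcal Q}_{|p}\setminus{\mathcal Q}_{|1}$ and the two quasivarieties genuinely differ. This explains why the exception in the statement cannot be removed.
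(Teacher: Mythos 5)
Your proof is correct and follows essentially the same route as the paper's: reduce to showing every domain in ${\mathcal D}_{|p}$ has trivial unit group, deduce characteristic $2$ from the oddness of $p$, and note that a nontrivial unit generates a finite subfield $\mathbb{F}_{2^d}$ ($d\ge 2$) whose multiplicative group must have order exactly $p$, forcing $p=2^d-1$. The only differences are cosmetic --- you apply Lagrange to the subgroup $\mathbb{F}_{2^d}^{\times}\le U(D)$ and identify $d=\mathrm{ord}_p(2)$, where the paper instead uses closure of ${\mathcal Q}_{|p}$ under subrings to bound $|S^{\times}|$ --- and your closing sharpness remark matches the paper's discussion immediately after the theorem.
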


\begin{proof}
  To prove that ${\mathcal Q}_{|p}={\mathcal Q}_{|1}$
  when $p$ is an odd non-Mersenne prime, it will suffice
  to show that these quasivarieties contain the same domains.
  We always have ${\mathcal Q}_{|m}\subseteq {\mathcal Q}_{|n}$
  when $m\mid n$,
  from the definition of these quasivarieties, so we must show
  that any domain $D\in {\mathcal Q}_{|p}$ is contained in
  ${\mathcal Q}_{|1}$ (i.e.  has a trivial unit group).

  Choose $D\in {\mathcal Q}_{|p}$.
  From Observation~\ref{obs}, we know (since $p$ is odd)
  that $D$ is an $\mathbb F_2$-algebra.
  Suppose that $\theta\in D$ is a nontrivial unit.
  Since $\theta$ has finite multiplicative order,
  and the prime subring of $D$
  is finite, the subring $S\subseteq D$ generated by $\theta$
  is finite. $S$ is a subring of a domain itself,
  hence it is a field, and $U(S)=S^{\times}$.
  $S$ belongs to ${\mathcal Q}_{|p}$,
  so $S^{\times}$ has order dividing $p$, and it must therefore be that
  $|S^{\times}|=p$. This shows that $S$ is a finite field
  of characteristic $2$ and of cardinality $|S|=p+1$.
  We derive that $p+1=2^s$ for some $s$, or
  $p=2^s-1$. This completes the proof that
  ${\mathcal Q}_{|p}={\mathcal Q}_{|1}$ unless
  $p$ is a Mersenne prime.  
\end{proof}

The primality of $p$ did not play a big role in the proof.
The same argument shows that if $n$ is any odd number,
then ${\mathcal Q}_{|n}={\mathcal Q}_{|1}$ unless $n$
is divisible by some number $x>1$ of the form
$x=2^s-1$. So, for example,
${\mathcal Q}_{|55}={\mathcal Q}_{|25}={\mathcal Q}_{|1}$.
But if $n$ is divisible by some number $x>1$ of the form
$x=2^s-1$, then ${\mathcal Q}_{|n}$
will contain some finite fields that are not in
${\mathcal Q}_{|1}$.

\section{A representation theorem for Vasconcelos monoids}
\label{s:5}

In this section we will focus on the structure ${\mathcal I}_{\textrm{f.g.}}(R)$
formed by the finitely generated ideals of a commutative unital ring $R$.
We will refer to ${\mathcal I}_{\textrm{f.g.}}(R)$ as \emph{the divisibility
theory of $R$}. Traditionally,
${\mathcal I}_{\textrm{f.g.}}(R)$ is considered partially ordered under
reverse inclusion: $I\le_l J$ $\Leftrightarrow$ $I\supseteq J$.
Thus $\langle{\mathcal I}_{\textrm{f.g.}}(R);\wedge\rangle$
is a meet subsemilattice of the dual ${\mathcal I}^d(R)$
of the ideal lattice of $R$ where
$I\wedge J=I+J$ for all $I,J\in{\mathcal I}_{\textrm{f.g.}}(R)$.
In general, ${\mathcal I}_{\textrm{f.g.}}(R)$ is \emph{not} a sublattice of
${\mathcal I}^d(R)$, because the intersection of finitely generated ideals of
$R$ need not be finitely generated.

Ideal multiplication induces another monoid structure
$\langle{\mathcal I}_{\textrm{f.g.}}(R);\cdot\rangle$
on ${\mathcal I}_{\textrm{f.g.}}(R)$.
Divisibility with respect to this multiplication
yields another partial order on ${\mathcal I}_{\textrm{f.g.}}(R)$:
for any $I,J\in{\mathcal I}_{\textrm{f.g.}}(R)$, we let
$I\le_m J$ $\Leftrightarrow$ $J=IK$ for  some
$K\in {\mathcal I}_{\textrm{f.g.}}(R)$.
In general, the partial orders $\le_l$ and $\le_m$ are different.

However, if $R$ is \emph{arithmetical}~\cite{fu},
that is, if the ideal lattice of $R$ is
distributive, then
\begin{itemize}
\item
  the partial orders $\le_l$ and $\le_m$ coincide
  (see \cite[Theorem~2]{j2} and \cite[p.~3972]{amv}),
  and will be denoted by $\le$; furthermore,
\item
  the intersection of any two finitely generated ideals is finitely
  generated~\cite[Proposition~1.7]{amv}.
\end{itemize}
Consequently,  
\begin{itemize}
\item
  $\langle{\mathcal I}_{\textrm{f.g.}}(R);\wedge,\vee\rangle$ is a sublattice
  of ${\mathcal I}^d(R)$; namely,
  $I\wedge J=I+J$ and $I\vee J=I\cap J=I_1J$
  for all $I,J\in{\mathcal I}_{\textrm{f.g.}}(R)$, where
  $I_1\in{\mathcal I}_{\textrm{f.g.}}(R)$ satisfies $I=(I\wedge J)I_1$
  (see \cite[Proposition~1.2 and its proof]{amv}).
\end{itemize}
In fact, it follows that
\begin{itemize}
\item
  $\langle{\mathcal I}_{\textrm{f.g.}}(R);\cdot,\le,\wedge,\vee\rangle$
  is a B\'ezout monoid as defined below (see \cite[Proposition~1.6]{amv}).
\end{itemize}

\begin{definition} (See \cite[Section~4]{B3}, \cite[Definition~1.1]{amv}.)
\label{basic}
A \emph{B\'{e}zout monoid} $S$ is a commutative monoid $\langle S;\cdot\rangle$
with $0$ such that
\begin{enumerate}
\item[(1)]
  the divisibility relation $a\mid b\Leftrightarrow aS\supseteq bS$
  is a partial order $\le $ on $S$, called the
  \emph{natural partial order of $S$},
\item[(2)]
  $\le$ is a lattice order, and the induced lattice
  $\langle S;\wedge,\vee\rangle$ is distributive;
\item[(3)]
  $\cdot$ is distributive over $\wedge$ and $\vee$; and
\item[(4)]
  $S$ is \emph{hyper-normal}, that is,
  for any elements $a,b\in S$, if $d=a\wedge b$ and $a_1\in S$ is such that 
  $a=da_1$, then there exists an element $b_1\in S$ satisfying $a_1\w b_1=1$ and
  $b=db_1$.
\end{enumerate}
\end{definition}

As we mentioned right before this definition, the divisibility theory
${\mathcal I}_{\textrm{f.g.}}(R)$ of an arithmetical ring $R$ is a B\'ezout monoid.
An major open problem is whether the converse of this statement is also true;
that is: Is every B\'ezout monoid isomorphic to the divisibility theory of an
arithmetical ring?
Primary examples of arithmetical rings are B\'ezout rings, that is,
commutative unital rings in which all finitely generated ideals are principal.
However, there are many other classes of arithmetical rings, e.g.,
semihereditary rings and rings of weak dimension $1$.
Nevertheless, by a result of Anderson~\cite{and}, the divisibility
theory ${\mathcal I}_{\textrm{f.g.}}(R)$
of every arithmetical ring $R$ is isomorphic to the divisibility theory
${\mathcal I}_{\textrm{f.g.}}(\sf{R})={\mathcal I}_{\textrm{princ}}(\sf{R})$
of a B\'ezout ring $\sf{R}$.
Therefore, the question above is equivalent to the following:
\begin{center}
  {\it
Is every B\'ezout monoid isomorphic to the divisibility theory of a B\'ezout
ring?}
\end{center}
The answer is known to be YES for B\'ezout monoids that are either
semihereditary~\cite{as1} or have only one minimal $m$-prime filter~\cite{as2}
(for the definitions, see the forthcoming paragraphs).

Our aim in this section is to prove a similar representation theorem
for a class of B\'ezout monoids that we will call \emph{Vasconcelos monoids}.
The name is motivated by the fact that a well-known example of a
B\'ezout ring constructed by
Vasconcelos~\cite[Example~3.2]{vas} (see also \cite[Example~3]{ma})
has its divisibility theory in this class.

Before defining Vasconcelos monoids we recall some definitions and basic facts
on B\'ezout monoids.
Let $S$ be a B\'ezout monoid.
For any element $a\in S$ we let $a^\perp$ denote the annihilator
$\{x\in S:ax=0\}$ of $a$.
By a \emph{filter of $S$} we mean a lattice
filter of $S$, that is, a subset $F$ of $S$ such that $F$ is up-closed
(i.e., $b\in F$ whenever $S\ni b\ge a\in F$) and is closed under $\wedge$.
Clearly, every filter $F$ of $S$ is a monoid ideal of $S$ as well, that is,
$F$ satisfies $FS\subseteq F$. Furthermore, properties~(2) and (3)
in the definition of a B\'ezout monoid imply that $a^\perp$ is
a filter of $S$ for every element $a\in S$.
A filter $F$ of $S$ is said to be
\emph{multiplication prime} (briefly, \emph{$m$-prime}) if $F\not=S$ and
$ab\in F$ implies $a\in F$ or $b\in F$ for all $a,b\in S$.
If $F$ is a filter of $S$, we define $S/F$ to be the factor
of $S=\langle S;\cdot,\le,\wedge,\vee\rangle$ by the congruence $\equiv_F$
defined for all $a,b\in S$ 
by $a\equiv_F b$ iff $a\wedge c=b\wedge c$ for some $c\in F$.
It is not hard to show (see~\cite[Theorem~2.15]{amv})
that $S/F$ is a B\'ezout monoid. 
The following two basic properties of B\'ezout monoids will also
be important later on.

\begin{proposition}
  \label{prop-bezout1}
  If $S$ is  B\'ezout monoid, then
  \begin{enumerate}
  \item[{\rm(1)}]
    $1$ is the unique invertible element of $S$; and
  \item[{\rm(2)}]
    the idempotent elements of $S$ form a Boolean
    algebra, and for any two idempotent elements
    $e,f$ of $S$ we have $e\vee f=ef$.~\cite[Propositions~1.14, 1.16]{amv}
  \end{enumerate}
\end{proposition}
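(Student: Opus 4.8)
The plan is to set up the order-theoretic skeleton of the natural partial order first, and then dispatch Item~(2) in the order: closure of the idempotents under the operations, the identity $e\vee f=ef$, and finally complementation, which is the crux.

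For Item~(1) I would record the elementary order facts that drop out of the definition $a\mid b\Leftrightarrow aS\supseteq bS$. Since $1\mid a$ for every $a$ (as $1\cdot a=a$), the identity $1$ is the least element of $\langle S;\le\rangle$, and dually $0$ is the greatest, because $a\mid 0$. If $u$ is invertible, say $uv=1$, then $u\mid 1$, so $u\le 1$; combined with $1\le u$ this forces $u=1$ by antisymmetry. Along the way I would isolate three facts used constantly below: multiplication is monotone in each argument; $a\le ab$ for all $a,b$, whence $a\vee b\le ab$ always; and, by axiom~(3), multiplication distributes over both $\wedge$ and $\vee$.

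For Item~(2) I would first prove a \emph{coprimality lemma}: if $x\wedge y=1$ then $x\vee y=xy$. The inequality $x\vee y\le xy$ is automatic; for the reverse, write $x\vee y=xc$ (possible since $x\le x\vee y$), and note $y\le xc$ and $y\le yc$, so $y\le xc\wedge yc=(x\wedge y)c=c$ by distributivity and $x\wedge y=1$; monotonicity then gives $xy\le xc=x\vee y$. Next I would check closure of the idempotents: $(ef)^2=ef$ is immediate from commutativity, while a short distributivity computation yields $(e\wedge f)e=e$ and $(e\wedge f)f=f$, hence $(e\wedge f)^2=e\wedge f$. To prove $e\vee f=ef$ I would reduce to the coprime case via hyper-normality: with $d=e\wedge f$ one has $de=e$, so $e=d\cdot e$ is a legitimate factorization, and hyper-normality supplies $f_1$ with $e\wedge f_1=1$ and $f=df_1$. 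Then $ef=(ed)f_1=ef_1=e\vee f_1$ by the lemma, and since $f_1\le f$ squeezes $e\vee f$ between $e\vee f_1$ and $ef$, all three coincide. In particular $e\vee f=ef$ is idempotent, so the idempotents form a sublattice of the distributive lattice $S$, hence a bounded distributive lattice with least element $1$ and greatest element $0$.

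The main obstacle is complementation, and I expect the decisive idea to be the use of hyper-normality on the \emph{self-factorization} $e=e\cdot e$ of an idempotent. Applying hyper-normality to $a=e$, $b=0$ with $d=e\wedge 0=e$ and the factorization $a_1=e$ (valid precisely because $e$ is idempotent) produces an element $e'$ with $e\wedge e'=1$ and $ee'=0$; then $e\vee e'\le ee'=0$ forces $e\vee e'=0$, so $e'$ is a lattice complement of $e$ in $S$. It remains to see that $e'$ is idempotent, and here distributing $e'$ over the meet $e\wedge e'=1$ does the job: $e'=e'(e\wedge e')=(e'e)\wedge e'^2=0\wedge e'^2=e'^2$, the last equality because $0$ is the top and so absorbing for $\wedge$. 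Thus $e'$ is an idempotent complement, and uniqueness of complements in a distributive lattice identifies it with the complement inside the idempotent sublattice, completing the proof that the idempotents form a Boolean algebra. I would close by remarking that it is exactly this step that fails for a three-element chain $\{1,e,0\}$, which satisfies axioms~(1)--(3) but has no complement for the middle element; hyper-normality is precisely the hypothesis that rules such chains out and forces the complement to exist.
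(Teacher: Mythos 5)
Your proposal addresses a statement the paper itself does not prove: Proposition~\ref{prop-bezout1} is simply quoted from \cite[Propositions~1.14, 1.16]{amv}, so there is no in-paper argument to compare against. Your self-contained derivation from Definition~\ref{basic} is essentially correct and well organized: the order facts in Item~(1), the coprimality lemma ($x\wedge y=1$ implies $x\vee y=xy$), the closure computations $(ef)^2=ef$ and $(e\wedge f)e=e$, the reduction of $e\vee f=ef$ to the coprime case via hyper-normality applied to the factorization $e=(e\wedge f)e$, and the key idea of applying hyper-normality to $a=e$, $b=0$, $a_1=e$ (legitimate precisely because $e$ is idempotent) to manufacture a complement $e'$, whose idempotency then falls out of distributing $e'$ over $e\wedge e'=1$ --- all of this is sound. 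The closing remark that the three-element chain satisfies axioms (1)--(3) but not (4) correctly isolates hyper-normality as the indispensable hypothesis.

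One justification is misstated, though the conclusion it supports is correct and follows from tools you already have. In the complementation step you write that $e\vee e'\le ee'=0$ ``forces'' $e\vee e'=0$. But in the natural partial order $1$ is the least and $0$ is the \emph{greatest} element (as you note yourself at the start), so the inequality $e\vee e'\le 0$ holds for every element of $S$ and forces nothing. The correct justification is your own coprimality lemma: hyper-normality delivers $e\wedge e'=1$, so the lemma gives $e\vee e'=ee'=0$ directly. With that one-line repair the proof is complete; in fact, uniqueness of complements in a distributive lattice is not even needed at the end, since exhibiting an idempotent complement for each idempotent already makes the bounded distributive lattice of idempotents a Boolean algebra.
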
  

It follows from statement (2) above that
if $S$ is a B\'ezout monoid, then
for any idempotent $e^2=e\in S$ there exists a unique idempotent $f^2=f\in S$
such that $e\wedge f=1$ and $e\vee f=ef=0$. We will refer to $f$ as the
\emph{orthogonal complement} of $e$.

For a B\'ezout monoid $S$, we say that
\begin{itemize}
\item
  $S$ is \emph{semihereditary} if for each $a\in S$ there exists an idempotent
  $e^2=e\in S$ such that $a^\perp=eS$; and
\item
  $S$ is \emph{semiprime} if it has no non-zero nilpotent elements, or
  equivalently, if the intersection of its $m$-prime filters is $\{0\}$.
\end{itemize}
It is known that semihereditary B\'ezout monoids are
semiprime~\cite[Corollary~2.2]{as1}.

For the rest of this section we will focus on B\'ezout monoids $S$ that are
semiprime.
For each $a\in S$ let $S_a=\{b\in S:b^\perp=a^\perp\}$.
In particular, $S_1$ is the set of all elements $b\in S$ satisfying
$b^\perp=1^\perp=\{0\}$;
that is, $S_1$ is the set of all elements of $S$
that are not zero divisors (in $S$).

\begin{proposition} 
  \label{prop-bezout2}
        \cite[Proposition~3.3, Theorem~3.5]{amv} 
  Let $S$ be a semiprime B\'ezout monoid, and let $B$ be the Boolean
  algebra of all idempotent elements of $S$.
  \begin{enumerate}
  \item[{\rm(1)}]
    $(st)^\perp=(s\vee t)^\perp$ and
    $(s\wedge t)^\perp=s^\perp\cap t^\perp$
    hold for arbitrary elements $s,t\in S$.
  \item[{\rm(2)}]
    For every element $a\in S$,
    \begin{enumerate}
    \item[{\rm(i)}]
    $S_a$ is a cancellative subsemigroup of $S$, which is closed under
    $\wedge$ and $\vee$, and
    \item[{\rm(ii)}]
    the map $S_a\to S/a^\perp$, $x\mapsto x/{\equiv_{a^\perp}}$ embeds $S_a$
    into the subsemigroup $(S/a^\perp)_1$ of $S/a^\perp$; moreover,
    $S_a$ and $(S/a^\perp)_1$ have the same group of quotients.
    \end{enumerate}
  \item[{\rm(3)}]
    $S$ is the disjoint union of its distinct
    (cancellative) submonoids $S_a$, where $S_0=\{0\}$
    and $S_1$ consists of all elements of $S$ that are
    not zero divisors in $S$.
    In particular, $S_{e_1}\cap S_{e_2}=\emptyset$ whenever
    $e_1,e_2$ are different elements in $B$.
  \item[{\rm(4)}]  
    $S_e=eS_1$ holds for every $e\in B$.
  \end{enumerate}
\end{proposition}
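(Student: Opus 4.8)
The plan is to treat Item (1) as the engine that drives the rest, and to isolate a single genuinely hard input---cancellativity of the non-zero-divisors of a B\'ezout monoid---to which the remaining items reduce. First I would prove Item (1). For the meet identity, distributivity of $\cdot$ over $\wedge$ gives $x(s\wedge t)=xs\wedge xt$, and since $0$ is the greatest element of $\langle S;\le\rangle$ one has $u\wedge v=0$ iff $u=v=0$; hence $x\in(s\wedge t)^\perp$ iff $xs=xt=0$ iff $x\in s^\perp\cap t^\perp$. For the product identity I would observe the divisibility sandwich $s\vee t\le st\le(s\vee t)^2$: the first inequality holds because $st$ is a common multiple of $s$ and $t$, and the second because writing $s\vee t=ss_1=tt_1$ gives $(s\vee t)^2=st\cdot s_1t_1$. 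Since $u\le v$ implies $u^\perp\subseteq v^\perp$, this sandwiches $(st)^\perp$ between $(s\vee t)^\perp$ and $((s\vee t)^2)^\perp$, and semiprimeness forces $((s\vee t)^2)^\perp=(s\vee t)^\perp$, because $xu^2=0$ yields $(xu)^2=0$ and hence $xu=0$. This is the only place Item (1) uses semiprimeness.

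Next I would read off the closure properties of Item (2)(i) and the embedding of Item (2)(ii) from Item (1). Closure of $S_a$ under $\wedge$ is immediate from the meet identity. For products, $b\mid bc$ gives $a^\perp\subseteq(bc)^\perp$, while $x(bc)=0$ forces $xc\in b^\perp=c^\perp$, so $xc^2=0$ and, by semiprimeness, $xc=0$; thus $(bc)^\perp=a^\perp$, and then $(b\vee c)^\perp=(bc)^\perp=a^\perp$ by the product identity. Writing $\phi\colon S_a\to S/a^\perp$ for the map of Item (2)(ii), I would first note that $\phi(b)=\phi(0)$ iff $b\in a^\perp$ (using that $a^\perp$ is an up-closed filter and $0$ is the top). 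Injectivity on $S_a$ then follows from a clean trick: if $b,b'\in S_a$ and $b\wedge c=b'\wedge c$ with $c\in a^\perp=b^\perp$, then $bc=0$, so $(b\vee c)^\perp=(bc)^\perp=S$ and hence $b\vee c=0$; likewise $b'\vee c=0$, and cancellativity of the \emph{distributive lattice} $\langle S;\wedge,\vee\rangle$ gives $b=b'$. That the image lands in $(S/a^\perp)_1$ is again the squaring trick: $bu\in a^\perp=b^\perp$ gives $b^2u=0$, so $bu=0$ and $u\in a^\perp$.

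Items (3) and (4) I would handle directly. The disjoint-union statement of Item (3) is the observation that $S_a=S_{a'}$ iff $a^\perp=a'^\perp$, together with $S_0=\{0\}$ (from $0^\perp=S$) and the identification of $S_1$ with the non-zero-divisors (from $1^\perp=\{0\}$); distinct idempotents $e_1\ne e_2$ have distinct annihilators, since $e_1^\perp=e_2^\perp$ would put each orthogonal complement $e_i'$ into the other idempotent's annihilator, which in the Boolean algebra $B$ forces $e_1'=e_2'$ and hence $e_1=e_2$. For Item (4), the inclusion $eS_1\subseteq S_e$ holds because a non-zero-divisor factor cannot enlarge an annihilator ($x(eb)=0\Rightarrow xe=0$ when $b\in S_1$), so $(eb)^\perp=e^\perp$. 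For the reverse inclusion, given $b\in S_e$ I would use $e'\in e^\perp=b^\perp$ to get $be'=0$, whence $b=b(e\wedge e')=be\wedge be'=be\wedge 0=be$; setting $c=b\wedge e'$ one checks $c^\perp=e^\perp\cap(e')^\perp=\{0\}$ (any $x$ in it satisfies $x=x(e\wedge e')=xe\wedge xe'=0$), so $c\in S_1$, and $ec=eb\wedge ee'=eb\wedge 0=eb=b$, giving $b=ec\in eS_1$.

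The main obstacle is the cancellativity asserted in Item (2)(i), which is also what makes the group of quotients in Item (2)(ii) meaningful. My plan is to \emph{derive} it from the embedding: since $\phi$ injectively maps $S_a$ into $(S/a^\perp)_1$, it suffices to prove that the non-zero-divisors of any B\'ezout monoid form a cancellative semigroup. Reducing $zx=zy$ (with $z$ a non-zero-divisor) via $d=x\wedge y$ and hyper-normality leaves the task of showing that $wx_1=w=wy_1$ with $x_1\wedge y_1=1$ forces $x_1=y_1=1$, and I do not expect an elementary lattice argument to close this. Instead I would invoke the structure theory of semiprime B\'ezout monoids: factoring at the $m$-prime filters $P$ yields totally ordered (chain) B\'ezout monoids $S/P$, the hypothesis $\bigcap_P P=\{0\}$ embeds $S$ (and likewise $S/a^\perp$) subdirectly into $\prod_P S/P$, and in a totally ordered B\'ezout monoid the non-zero-divisors form the positive cone of an ordered group and hence cancel. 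Cancellativity of $(S/a^\perp)_1$ then descends to the subsemigroup $S_a$, and the group-of-quotients claim follows from the computation that $ua\in S_a$ for every lift $u$ of a non-zero-divisor class, so that $\phi(ua)\phi(a)^{-1}$ realizes that class. Establishing this subdirect/valuation representation, which leans on the cited background on $m$-prime filters, is where the real work lies.
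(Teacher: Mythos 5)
Most of your proposal is correct and, in fact, more self-contained than the paper, which simply cites \cite[Proposition~3.3, Theorem~3.5]{amv} for items (1), (2) and the first statement of (3), and only proves the remaining assertions itself; for those (the idempotent statement in (3), and (4)) your arguments essentially coincide with the paper's. The genuine gap is exactly where you locate ``the real work'': cancellativity in (2)(i), and with it the group-of-quotients claim in (2)(ii), is never proved but delegated to a structure theory you do not establish. Worse, the statement you plan to invoke is false: for an $m$-prime filter $P$, the quotient $S/P$ need \emph{not} be totally ordered. Indeed, if $S$ is any B\'ezout monoid without zero divisors that is not a chain (e.g.\ the divisibility theory $(\mathbb{N}\times\mathbb{N})\cup\{0\}$ of a PID with exactly two maximal ideals), then $P=\{0\}$ is an $m$-prime filter, $\equiv_P$ is the equality relation, and $S/P\cong S$ is not a chain. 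What is true for $m$-prime $P$ is only that $S/P$ has no zero divisors; and even granting that, your subdirect-product step needs further unproved facts (injectivity of $S\to\prod_P S/P$, and that a non-zero-divisor stays nonzero in enough components for componentwise cancellation to globalize).

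Ironically, the elementary argument you ``do not expect'' is one more application of hyper-normality away from your own reduction. You have already reduced to $ux_1=u=uy_1$ with $x_1\wedge y_1=1$, where $u=zd$ is a non-zero-divisor (meets and products of non-zero-divisors are again non-zero-divisors, with no semiprimeness needed). Now apply Definition~\ref{basic}(4) to the pair $(a,b)=(u,0)$: since $u\wedge 0=u$ and $u=ux_1$, there exists $b_1$ with $x_1\wedge b_1=1$ and $ub_1=0$; as $u$ is a non-zero-divisor, $b_1=0$, whence $x_1=x_1\wedge 0=x_1\wedge b_1=1$, and likewise $y_1=1$, so $x=d=y$. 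This shows that the non-zero-divisors of \emph{any} B\'ezout monoid form a cancellative subsemigroup --- no $m$-prime filters, chains, or semiprimeness required. Feeding this into your (correct) embedding $S_a\hookrightarrow (S/a^\perp)_1$, where $S/a^\perp$ is again a B\'ezout monoid by \cite[Theorem~2.15]{amv}, proves (2)(i), and your computation that $ua\in S_a$ for any lift $u$ of a class in $(S/a^\perp)_1$ then gives the common group of quotients, completing (2)(ii).
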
  

\begin{proof}
  For a proof of (1), (2), and the first statement in (3) we refer
  the reader to \cite[Proposition~3.3]{amv} and \cite[Theorem~3.5]{amv}.
  For any $a,b\in S$ the
  definitions of $S_a$ and $S_b$ imply that
  $S_a\not=S_b\ \Leftrightarrow\ a^\perp\not=b^\perp\ \Leftrightarrow\ S_a\cap S_b=\emptyset$.
  Therefore the second statement in (3)
  is an immediate consequence of the fact that for $e_1,e_2\in B$
  we have $e_1^\perp\cap B\not=e_2^\perp\cap B$ whenever $e_1\not=e_2$.

  In (4) the inclusion $S_e\supseteq eS_1$ holds, because
  $(ec)^\perp=e^\perp$ for all $c\in S_1$.
  To prove the reverse inclusion $S_e\subseteq eS_1$, let $s\in S_e$, and let
  $f$ be the orthogonal complement of $e$.
  Then $fs=0$, because $s\in S_e$ implies that $f\in e^\perp=s^\perp$.
  Hence,
  \[
  s=1s=(e\wedge f)s=es\wedge fs=es\wedge 0=es\wedge ef
  =e(s\wedge f),
  \]
  where $s\wedge f\in S_1$, because
  $(s\wedge f)^\perp=s^\perp\cap f^\perp=e^\perp\cap f^\perp=(e\wedge f)^\perp
  =1^\perp=\{0\}$.
\end{proof}

Now we are ready to define Vasconcelos monoids.

\begin{definition}
  \label{vas1}
  A \emph{Vasconcelos monoid} is a B\'ezout monoid $S$ that satisfies the
  following conditions:
  \begin{enumerate}
  \item[(1)]
    $S$ is semiprime,
  \item[(2)]
    $S$ contains a zero divisor $a\not=0$ such that
    $P=a^{\perp}$ is an $m$-prime filter of $S$ which consists of idempotents,
    and
  \item[(3)]
    for the submonoid $C=S_1\cup S_a$ of $S$ (where $S_1S_a\subseteq S_a$),
    $C\cup\{0\}$ is isomorphic to $S/P=S/a^\perp$ via the map
    $x\mapsto x/{\equiv_P}$ ($x\in C\cup\{0\}$), hence $C\cup\{0\}$ is a
    B\'ezout monoid, and $C$ is isomorphic to the positive cone
    of the lattice ordered abelian group of quotients of $S_a$.
  \end{enumerate}
\end{definition}

We will need several consequences of this definition, which we
state and prove in the next proposition.

\begin{proposition}
  \label{prop-vas}
Let $S$ be a Vasconcelos monoid, let $a\in S$ be as in
Definition~\ref{vas1} above, and let $P=a^\perp$.
Furthermore, let $B$ denote the Boolean algebra
of all idempotent elements of $S$.
\begin{enumerate}
\item[{\rm(1)}]
$B\setminus P$ is the set of orthogonal complements
of the elements of $P$.  
\item[{\rm(2)}]
$ce=e$ holds for all $c\in S_1$ and $e\in P$; hence, 
  $P\subseteq cS$ for all $c\in S_1$.  
\item[{\rm(3)}]
  $de=0$ and $df=d$ hold for all $d\in S_a\cup\{0\}$,
  $e\in P$, and $f\in B\setminus P$.  
\item[{\rm(4)}]
$S$ is the disjoint union of its subsets
\begin{align*}  
S_1(B\setminus P) & {}=\{cf:c\in S_1,\ f\in B\setminus P\}\quad \text{and}\\
(S_a\cup\{0\})\wedge P & {}=\{d\wedge e: d\in S_a,\ e\in P\}.
\end{align*}
Moreover, every element of
$S_1(B\setminus P)$ can be written uniquely
as a product $cf$ with $c\in S_1$, $f\in B\setminus P$,
and every element of
$(S_a\cup\{0\})\wedge P$ can be written uniquely as a meet $d\wedge e$ with
$d\in S_a\cup\{0\}$, $e\in P$.
\item[{\rm(5)}]
If the elements of $S$ are written in the form described in (4), then
the monoid operation of $S$ takes the following form:
for all $c_1,c_2\in S_1$, $d_1,d_2\in S_a$, $e_1,e_2\in P$, and
$f_1,f_2\in B\setminus P$ where $f_i$ is the orthogonal complement of
$e_i$ for $i=1,2$,
\begin{align*}
  (c_1f_1)(c_2f_2)& {}=(c_1c_2)(f_1\vee f_2)
  \quad\text{where $c_1c_2\in S_1$ and $f_1\vee f_2\in B\setminus P$},\\
  \qquad\ \ 
  (d_1\wedge e_1)(d_2\wedge e_2)& {}=d_1d_2\wedge(e_1\vee e_2)
  \quad\text{where $d_1d_2\in S_a\cup\{0\}$ and $e_1\vee e_2\in P$},\\
  (c_1f_1)(d_2\wedge e_2) & {}=c_1d_2\wedge e
  \quad\text{where $c_1d_2\in S_a\cup\{0\}$ and $e=f_1e_2\in P$ is}\\
  &\qquad\qquad\qquad\text{the relative complement of $e_1e_2$ in $e_2$,}\\
  &\qquad\qquad\qquad\text{i.e., $e\in P$ with
    $e_1e_2\wedge e=e_2$, $e_1e_2\vee e=0$.}
\end{align*}  
\end{enumerate}
\end{proposition}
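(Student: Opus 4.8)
The whole proposition rests on one elementary observation that I would extract first: since $P=a^\perp$, for every idempotent $e\in P$ and every $s\in S$ we have $a(es)=(ae)s=0$, so $es\in a^\perp=P$; in particular each $es$ is again idempotent and $eS\subseteq P\subseteq B$. With this in hand, (1) is quick. Note $0\in P$ and $1\notin P$ because $P$ is a proper filter, and on $B$ the $m$-prime condition (read through $gh=g\vee h$ for idempotents) makes $P$ a prime, hence ultra-, filter of $B$: for each $g\in B$ exactly one of $g,\bar g$ lies in $P$. If $e\in P$ has orthogonal complement $f$, then $f\notin P$, for otherwise $e\wedge f=1\in P$ forces $P=S$; conversely, given $g\in B\setminus P$ with complement $e$, from $ge=g\vee e=0\in P$ and $m$-primeness we get $e\in P$, exhibiting $g=\bar e$ as the complement of an element of $P$. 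This is (1).

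For (2), fix $c\in S_1$, $e\in P$, and set $b=ce$. By the observation $b\in P$, so $b$ is idempotent, and by Proposition~\ref{prop-bezout2}(4) we have $b=ce\in eS_1=S_e$, so $b^\perp=e^\perp$. Since every idempotent $p$ satisfies $p^\perp=\bar pS$ (the inclusion $\bar pS\subseteq p^\perp$ is clear, and $px=0$ gives $x=xp\wedge x\bar p=x\bar p\in\bar pS$ via $x=x(p\wedge\bar p)$), the equality $b^\perp=e^\perp$ reads $\bar bS=\bar eS$, whence $\bar b=\bar e$ and $b=e$; thus $ce=e$ and then $e=ce\in cS$, giving $P\subseteq cS$. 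For (3), if $d\in S_a$ then $e\in P=d^\perp$ gives $de=0$, and for $f\in B\setminus P$, writing $f=\bar{e'}$ with $e'\in P$ by (1), we get $de'=0$ and hence $d=d(f\wedge e')=df\wedge de'=df\wedge 0=df$; the case $d=0$ is trivial.

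The heart of the proof is (4). I would first record the quotient picture: by Definition~\ref{vas1}(3) the natural map $q\colon S\to S/P$ restricts to an isomorphism $C\cup\{0\}\to S/P$ with $C=S_1\sqcup S_a$, and $S/P$ is a positive cone with zero adjoined, so its nonzero elements are exactly nonzerodivisors and $q^{-1}(\bar 0)=P$. As $q$ is a lattice homomorphism with $q(e)=\bar 0$ for $e\in P$ and $q(f)=\bar 1$ for $f\in B\setminus P$ (using $f\wedge\bar f=1$, $\bar f\in P$), one gets $q(cf)=q(c)\in q(S_1)$ and $q(d\wedge e)=q(d)\in q(S_a\cup\{0\})$; since $q$ is injective on $C\cup\{0\}$ and $S_1,S_a,\{0\}$ are disjoint there, the two sets in (4) have disjoint $q$-images, yielding both disjointness and uniqueness (recover $c,d$ by injectivity, $f$ as the least idempotent in $(cf)^\perp=\bar fS$, and $e$ as the least idempotent in $(d\wedge e)^\perp$). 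For coverage, take $s\in S$: if $s\in P$ then $s=0\wedge s$ lies in the second set; otherwise $q(s)=q(g)$ for a unique $g\in C$, and there is an idempotent $t\in P$ with $s\wedge t=g\wedge t$. Put $f=\bar t\in B\setminus P$; multiplying this equation by $f$ and using $tf=t\vee\bar t=0$ and distributivity gives $sf=gf$, while $s=s(t\wedge f)=st\wedge sf$ with $st\in P$ idempotent by the observation. If $g=d\in S_a$, then $gf=df=d$ by (3), so $s=d\wedge e'$ with $e'=st\in P$. If $g=c\in S_1$, then $sf=cf\in S_f$ has $(sf)^\perp=f^\perp=tS$, so $s^\perp=(st)^\perp\cap(sf)^\perp=\overline{st}\,S\cap tS=(\overline{st}\cdot t)S=wS$ with $w=\overline{st}\vee t\in P$; hence $s\in S_{\bar w}=\bar wS_1$ by Proposition~\ref{prop-bezout2}(4), placing $s$ in the first set.

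Finally, (5) is a direct computation from (1)--(4): I would expand each product by distributivity of $\cdot$ over $\wedge$, collapse the cross terms using (2) (that $ce''=e''$ for $e''\in P$) and (3) (that $df''=d$ and $de''=0$), and use the Boolean identity $e''f''=e''\vee f''$; the outputs land in the right classes because $P$ is prime (so $S_1$ and $B\setminus P$ are closed under the relevant operations) and a product of an $S_1$- and an $S_a$-element is again in $S_a\cup\{0\}$, being a member of $C$ with nonzero annihilator. The relative-complement description $e=f_1e_2$ in the mixed case is just the Boolean check $(e_1\vee e_2)\wedge(\bar e_1\vee e_2)=e_2$ and $(e_1\vee e_2)\vee(\bar e_1\vee e_2)=0$. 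I expect the genuine obstacle to be the coverage half of (4): producing the idempotent to peel off $s$. The device that unlocks it is to read the congruence $s\equiv_P g$ through a witnessing idempotent $t\in P$, multiply by the complement $f=\bar t$ to get $sf=gf$ exactly, and then, in the nonzerodivisor branch, to prove that $s^\perp$ is \emph{principal} so that Proposition~\ref{prop-bezout2}(4) applies. All of this leans on the preliminary fact $eS\subseteq P$ for $e\in P$, which is precisely what forces the ``$t$-part'' $st$ to be an idempotent of $P$.
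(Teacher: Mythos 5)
Your items (1)--(3), the coverage and disjointness halves of (4), and item (5) are correct, and several steps genuinely differ from the paper's proof in a good way. For coverage you split $s=st\wedge sf$ with $f=\bar t$, dispatch the $S_a$-branch at once via $gf=df=d$ from (3), and in the $S_1$-branch you compute $s^\perp=(st)^\perp\cap(sf)^\perp=\overline{st}S\cap tS=(\overline{st}\,t)S=wS$ and conclude $s\in S_{\bar w}=\bar wS_1$ from Proposition~\ref{prop-bezout2}(4); the paper instead exhibits an explicit factorization $s=c(f'\wedge se')$ and verifies it by computation, so your route is cleaner. Likewise, recovering $c$ and $d$ in the uniqueness statements from injectivity of the quotient map $q$ on $C\cup\{0\}$ is tidier than the paper's direct computations (and it automatically covers the case $d_1=0$ or $d_2=0$, where the paper's appeal to cancellation in $S_a$ does not literally apply).

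However, there is a genuine error in your uniqueness argument for the second set: you propose to recover $e$ from $d\wedge e$ as ``the least idempotent in $(d\wedge e)^\perp$'', but $e$ does not belong to $(d\wedge e)^\perp$ at all. Indeed, by (3) and the fact that $0$ is the top of the natural order, $e(d\wedge e)=de\wedge e^2=0\wedge e=e\neq 0$ whenever $e\neq 0$; and $\bar e$ is not in the annihilator either, since $(d\wedge e)\bar e=d\bar e\wedge e\bar e=d\wedge 0=d\neq 0$. In fact $(d\wedge e)^\perp=d^\perp\cap e^\perp=P\cap\bar eS$ is the set of elements of $P$ lying \emph{above} $\bar e$; this set need not have a least element when $P$ is non-principal, and even when it does, that element is not $e$. (Contrast with the first set, where the least element of $(cf)^\perp=\bar fS$ really is $\bar f$, so your recovery of $f$ is fine up to wording.) So the uniqueness of the $P$-component is unproved as written. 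The fix is short and is exactly what the paper does: having secured $d_1=d_2$, multiply $d_1\wedge e_1=d_2\wedge e_2$ by $e_1$ and by $e_2$; since $d_ie_j=0$ and $0\wedge x=x$, this gives $e_1=e_1e_2$ and $e_2=e_1e_2$, whence $e_1=e_2$.
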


\begin{proof}
To prove (1), recall that  
$P\subseteq B$ is an $m$-prime filter of $S$.
Therefore,
if $e,f$ is any pair of orthogonal idempotents in $B$, then 
$ef=0\in P$ and $e\wedge f=1\notin P$ imply that exactly one of $e,f$
belongs to $P$, which proves statement (1).

For (2), let $c\in S_1$ and $e\in P$. Then $ce\in P$, since $P$ is a filter
of $S$, and $ce$ is idempotent, since $P\subseteq B$. But $c\in S_1$ implies
that $(ce)^\perp=e^\perp$, therefore it must be that $ce=e$.
Now the inclusion $P\subseteq cS$ is clear for all $c\in S_1$.

Statement (3) is trivial if $d=0$, so let us assume that $d\in S_a$.
In view of statement~(1), we may assume without loss of generality that
$f\in B\setminus P$ is the orthogonal complement of $e\in P$.
Since $d\in S_a$, we have $d^\perp=a^\perp=P$, so $de=0$.
Hence, $d=d1=d(e\wedge f)=de\wedge df=0\wedge df=df$.

To prove the first statement in (4),
let $s\in S$. By item~(3) in Definition~\ref{vas1},
there exists $c\in C\cup\{0\}=S_1\cup S_a\cup\{0\}$ such that
$s/{\equiv}_P=c/{\equiv}_P$, that is, $s\wedge e'=c\wedge e'$ for some
$e'\in P$. Let $f'$ denote the orthogonal complement of $e'$.
Then $e'f'=0$ and $e'\wedge f'=1$, hence
$sf'=sf'\wedge e'f'=(s\wedge e')f'=(c\wedge e')f'=cf'\wedge e'f'=cf'$.
Notice also that $se'\in P$, so $se'\in B$ (i.e., $se'$ is idempotent).
First, let $c\in S_1$. Then, by statement~(2), $ce'=e'$. Since
$se'\in B$, we get that $f=f'\wedge se'\in B$.
Clearly, $f\notin P$, because $f\le f'$ and $f'\notin P$.
Furthermore,
\[
cf=c(f'\wedge se')=cf'\wedge cse'=sf'\wedge se'=s(f'\wedge e')=s1=s, 
\]
where the third equality follows from the equalities $cf'=sf'$ and $ce'=e'$
proved earlier. This show that in the case $c\in S_1$, we have that
$s\in S_1(B\setminus P)$.
Now let $c\in S_a\cup\{0\}$. If $c=0$, that is, if $s\wedge e'=0\wedge e'=e'$
for some $e'\in P$, then $s\ge e'$ and $s\in P$. Hence $s\in \{0\}\wedge P$.
It remains to consider the case $c\in S_a$. Since $P$ is an $m$-prime filter
of $S$ and $s,c,f'\notin P$, we get that $sf'=cf'\notin P$.
Next we want to argue that $cf'\in S_a$, that is, $(cf')^\perp=a^\perp=P$.
The inclusion $\supseteq$ is clear, because $c\in S_a$. For $\subseteq$, let
$x\in(cf')^\perp$, that is, $cf'x=0$. Then $cf'x\in P$ and $cf'\notin P$
imply that $x\in P$, as required. Thus, using again the equality $sf'=cf'$
we get that
\[
s=s1=s(f'\wedge e')=sf'\wedge se'=cf'\wedge se'
\]
with
$cf'\in S_a$ and $se'\in P$, which proves that $s\in S_a\wedge P$ in this case.

Our argument above shows that
the set $S_1(B\setminus P)$ is the union of the equivalence classes
$c/{\equiv}_P\in S/P$ with $c\in S_1$, while the set $(S_a\cup\{0\})\wedge P$
is the union of the equivalence classes
$c/{\equiv}_P\in S/P$ with
$c\in S_a\cup\{0\}$. Therefore condition (3) in Definition~\ref{vas1}
ensures that the sets $S_1(B\setminus P)$ and $S_1(B\setminus P)$
are disjoint.
This completes the proof of the first statement in (4).

Now we prove the uniqueness statement in (4).
Let $c_1,c_2\in S_1$ and $f_1,f_2\in B\setminus P$ be such that
$c_1f_1=c_2f_2$. Then, by Proposition~\ref{prop-bezout2}(4),
$c_1f_1=c_2f_2$ is an element of
$f_iS_1=S_{f_i}$ for $i=1,2$. Therefore $S_{f_1}$ and $S_{f_2}$ are not disjoint,
so by Proposition~\ref{prop-bezout2}(3)
it must be that $f_1=f_2$.
To show that $c_1=c_2$, consider the orthogonal complement $e$ of
$f_1=f_2\in B\setminus P$, and recall from statements (1) and (2)
that $e\in P$ and $c_1e=e=c_2e$.
Thus, $c_i = c_i1=c_i(e\wedge f_i)=c_ie\wedge c_if_i=e\wedge c_if_i$ for
$i=1,2$, whence $c_1=e\wedge c_1f_1=e\wedge c_2f_2=c_2$. 
This proves the uniqueness
statement for the subset $S_1(B\setminus P)$ of $S$.
Now let $d_1,d_2\in S_a\cup\{0\}$ and $e_1,e_2\in P$ be such that
$d_1\wedge e_1=d_2\wedge e_2$.
Then all four products $d_1e_1, d_1e_2, d_2e_1, d_2e_2$ are $0$
by statement~(3).
It follows that $d_1^2=d_1d_2$, because
\[
d_1^2=d_1^2\wedge 0=d_1^2\wedge d_1e_1=d_1(d_1\wedge e_1)
=d_1(d_2\wedge e_2)=d_1d_2\wedge d_1e_2=d_1d_2\wedge 0=d_1d_2.
\]
Since $S_a$ is a cancellative submonoid of $S$
(by Proposition~\ref{prop-bezout2}(2)), this forces
$d_1=d_2$. A similar calculation shows that $e_1^2=e_1e_2$ holds as well:
\[
e_1^2=0\wedge e_1^2=d_1e_1\wedge e_1^2=(d_1\wedge e_1)e_1
=(d_2\wedge e_2)e_1=d_2e_1\wedge e_2e_1=0\wedge e_2e_1=e_1e_2.
\]
By switching the roles of $d_1\wedge e_1$ and $d_2\wedge e_2$ we also get that
$e_2^2=e_1e_2$.
Hence $e_1=e_1^2=e_1e_2=e_2^2=e_2$, which proves the uniqueness statement
for the subset $(S_a\cup\{0\})\wedge P$ of $S$.
The proof of statement (4) is now complete.

For the proof of statement (5) let $c_1,c_2,d_1,d_2,e_1,e_2,f_1,f_2$
satisfy the assumptions. For the elements $c_1,c_2,d_1,d_2$,
Proposition~\ref{prop-bezout2}(2)(i) implies
that $c_1c_2\in S_1$ and $d_1d_2\in S_a\cup\{0\}$.
The fact that $c_1d_2\in S_a\cup\{0\}$
is clear if $d_2=0$, and follows from $(c_1d_2)^\perp=d_2^\perp=a^\perp$,
since $c_1\in S_1$ and $d_2\in S_a$.
Statement (1) and the fact that $P$ is a filter clearly imply that
$e_1\vee e_2\in P$, $e=f_1e_2\in P$, and the idempotent $f_1\vee f_2$,
which is the orthogonal complement of $e_1\wedge e_2\in P$
belongs to $B\setminus P$.
Furthermore, it is straightforward to verify that $e=f_1e_2$
is the relative complement of $e_1e_2$ in $e_2$; indeed,
$e_1e_2\wedge e=e_1e_2\wedge f_1e_2=(e_1\wedge f_1)e_2=1e_2=e_2$
and $e_1e_2\vee e=e_1e_2\vee f_1e_2=(e_1\vee f_1)e_2=0e_2=0$.

Now we verify the three equalities in (5).
For the first one, clearly, $(c_1f_1)(c_2f_2)=(c_1c_2)(f_1f_2)$
where $f_1f_2=f_1\vee f_2$ by Proposition~\ref{prop-bezout1}(2).
For the second one, we have
$(d_1\wedge e_1)(d_2\wedge e_2)=d_1d_2\wedge d_1e_2\wedge d_2e_1\wedge e_1e_2$
where $d_1e_2=0=d_2e_1$ by statement (3). Hence,
$(d_1\wedge e_1)(d_2\wedge e_2)=d_1d_2\wedge e_1e_2$ where
$e_1e_2=e_1\vee e_2$ as before.
Finally, for the third equality,
$(c_1f_1)(d_2\wedge e_2)= c_1f_1d_2\wedge c_1f_1e_2$ where
$d_2f_1=d_2$ and $c_1e_2=e_2$ hold by statements (3) and (2), respectively.
Therefore, $(c_1f_1)(d_2\wedge e_2)= c_1d_2\wedge f_1e_2=c_1d_2\wedge e$,
completing the proof.
\end{proof}

The main result of this section is the following representation theorem.

\begin{theorem}
\label{vas2}
For every Vasconcelos monoid $S$ there exists a B\'ezout ring $R$
such that the divisibility theory of $R$ is isomorphic to $S$.
\end{theorem}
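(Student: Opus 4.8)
The plan is to realize $S$ explicitly as the divisibility theory of a pullback of a B\'ezout domain and a von Neumann regular ring, glued over the Stone space of the Boolean algebra of idempotents of $S$.

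First I would read off from $S$ the data needed to build $R$. By Proposition~\ref{prop-vas}, the monoid $S$ is assembled from three pieces: the Boolean algebra $B$ of idempotents together with the $m$-prime filter $P$; the cancellative monoid $S_1\cup S_a\cong G^+$, the positive cone of the lattice-ordered group $G$ of quotients of $S_a$; and the action of $B$ on this cone encoded in the multiplication rules of Proposition~\ref{prop-vas}(5). Via Stone duality, $P$ is (up to the order reversal built into a B\'ezout monoid, where $1$ is the bottom and $0$ the top) an ultrafilter of $B$, hence a point $x_0$ of the Stone space $X=\mathrm{Spec}(B)$; the idempotents of $B\setminus P$ then correspond to clopen neighbourhoods of $x_0$ and those of $P$ to clopen sets avoiding it. Using the Jaffard--Ohm--Kaplansky realization of lattice-ordered groups I would realize $S/P\cong (S_1\cup S_a)\cup\{0\}$ as the divisibility theory $\mathcal I_{\mathrm{princ}}(V)$ of a B\'ezout domain $V$ with group of divisibility $G$. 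The submonoid $S_1\subseteq G^+$ of nonzerodivisors is down-closed and its complement $S_a$ is an ideal, so $S_1$ generates a convex directed subgroup $H\le G$; let $\mathfrak p\lhd V$ be the corresponding prime, so that $V/\mathfrak p$ has divisibility group $H$ and $\kappa:=\mathrm{Frac}(V/\mathfrak p)$ is the residue field of $V_{\mathfrak p}$. Here $H\neq G$, equivalently $\mathfrak p\neq 0$, precisely because $S$ contains the zero divisor $a$.

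Next I would define
\[
R \;=\; V\times_{\kappa}T\;=\;\{(v,t): v\in V,\ t\in T,\ \psi(v)=t(x_0)\},
\]
where $T$ is the (von Neumann regular) ring of locally constant functions $X\to\kappa$, where $\psi\colon V\to V/\mathfrak p\hookrightarrow\kappa$ is reduction modulo $\mathfrak p$, and where $t\mapsto t(x_0)$ is evaluation at $x_0$. A short computation then locates the three features of $S$ inside $R$: the idempotents of $R$ are exactly the pairs $(0,\mathbf 1_U)$ with $x_0\notin U$ and $(1,\mathbf 1_U)$ with $x_0\in U$, so they reproduce $B$ with $P$ picked out by the clopen sets avoiding $x_0$; the element $a=(a_0,0)$ with $a_0\in\mathfrak p\setminus\{0\}$ is a nonzero zero divisor whose annihilator $\{(0,t):t(x_0)=0\}$ meets the idempotents in exactly $P$, so $a^\perp=P$ is the required non-principal $m$-prime filter of idempotents; and a pair $(v,t)$ is a nonzerodivisor iff $t$ is nowhere zero and $v\notin\mathfrak p$, in which case any two admissible choices of $t$ differ by a unit, so that modulo units the nonzerodivisors are classified by the value of $v$ in $H^+$, giving $S_1\cong H^+$.

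The main obstacle is to prove that $R$ is a B\'ezout ring. Since $V$ is a B\'ezout domain (principal ideals comparable when $G$ is totally ordered, controlled by the lattice structure of $G$ in general) and $T$ is von Neumann regular, I would prove directly that every finitely generated ideal of $R$ is principal: given generators $f_1,\dots,f_n\in R$, use idempotents of $T$ to split $X$ into finitely many clopen pieces on which the $\kappa$-components behave uniformly, use the valuation structure of $V$ at $x_0$ to select a dominating germ, and patch the resulting local generators into a single global one via the compatibility condition $\psi(v)=t(x_0)$. The delicate point is that $\psi$ is \emph{not} surjective once $H\neq 0$, so the standard conductor-square criteria for pullbacks do not apply verbatim; I would instead verify the Bézout property by hand, and this verification---simultaneously tracking the $V$-germ at $x_0$ and the clopen supports away from it---is where the real work lies. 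Once $R$ is known to be B\'ezout, Anderson's result cited in the introduction gives $\mathcal I_{\mathrm{princ}}(R)=\mathcal I_{\mathrm{f.g.}}(R)$.

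Finally I would assemble the isomorphism $\Phi\colon\mathcal I_{\mathrm{f.g.}}(R)\to S$. Every principal ideal of $R$ is generated either by a nonzerodivisor times an idempotent (the region away from $x_0$) or by an element whose germ at $x_0$ lies in $\mathfrak p$ (the region at $x_0$); I would send the former to the product $cf\in S_1(B\setminus P)$ and the latter to the meet $d\wedge e\in (S_a\cup\{0\})\wedge P$, matching the normal forms of Proposition~\ref{prop-vas}(4). Bijectivity of $\Phi$ follows from the uniqueness of those normal forms, and preservation of multiplication---hence, by Propositions~\ref{prop-bezout1} and \ref{prop-bezout2}, of the entire B\'ezout monoid structure---reduces to checking the three product formulas of Proposition~\ref{prop-vas}(5) against ideal multiplication in $R$. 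This last step is routine but lengthy, with the cross term $(c_1f_1)(d_2\wedge e_2)=c_1d_2\wedge e$ being the one that most directly tests whether the gluing at $x_0$ was set up correctly.
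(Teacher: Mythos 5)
Your proposal is correct and is essentially the paper's own proof: the pullback $V\times_{\kappa}T$ you construct is isomorphic to the ring the paper builds, which (following Vasconcelos) is presented instead as the product set $H\times I$ with multiplication $(h_1,i_1)(h_2,i_2)=(h_1h_2,\,h_1i_2+h_2i_1+i_1i_2)$, where $H$ is a Kaplansky--Jaffard--Ohm-style B\'ezout domain with divisibility theory $C\cup\{0\}$ (realized concretely as a localized semigroup algebra) and $I$ is precisely the ideal of those locally constant functions on the Stone space of $B$ that vanish near the point corresponding to $P$. Your proof outline --- the description of the idempotents, the zero divisor $(a_0,0)$ with annihilator meeting the idempotents in $P$, the two normal forms for principal ideals, a by-hand B\'ezout verification in cases, and an isomorphism matched against the normal forms and product formulas of Proposition~\ref{prop-vas}(4)--(5) --- follows the paper's argument step for step.
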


\begin{proof}
Let $S$ be a Vasconcelos monoid, and let $B$ denote the Boolean algebra
of all idempotent elements of $S$.
By Definition~\ref{vas1}, $S$ is semiprime,
and $S$ contains a zero divisor $a\not=0$ such that
conditions (2)--(3) of the definition hold for $P=a^\perp$ and
the submonoid $C=S_1\cup S_a$ of $S$.
Recall that $C\cup\{0\}$ is B\'ezout monoid isomorphic to $S/P$, and
$C$ is a cancellative monoid isomorphic to the positive cone
of the lattice ordered abelian group of quotients of $S_a$.

For the construction of the B\'ezout ring $R$ with divisibility theory $S$
we will use parts of the construction developed for the main theorem
of~\cite{as1}, which we will recall now.

Let $L$ be an arbitrary field, and let $H$ be
the localization of the semigroup algebra $LC$ at the set
\begin{equation*}
\biggl\{\sum_{i=1}^n k_is_i: 0\neq k_i\in L,\ s_i\in C,\ \wi s_i=1,\,\,
n\in \N\biggr\},
\end{equation*}
which is a B\'ezout domain
with divisibility theory isomorphic to $C\cup 0$.
Since $S$ is a Vasconcelos monoid, condition~(3) in Definition~\ref{vas1}
implies that the ideal $Q$ of $H$ generated by all elements $s\in S_a$
is a prime ideal, and hence $H/Q$ is an integral domain. Let $K$ denote
the field of fractions of $H/Q$, and note that
$K$ is isomorphic to the field of fractions of the semigroup algebra $LS_1$.

The second piece of the construction we need from~\cite{as1} is the
ring $C_K(\X)$ of all continuous functions from the spectrum $\X$
of the Boolean algebra $B$ of all idempotents of $S$ endowed with
the Zariski topology, into the field $K$ endowed with
the discrete topology.
For each $e\in B$ let $D_e$ denote the set of all maximal ideals of $B$
not containing $e$.
The sets $D_e$  ($e\in B$)
form a basis for the Zariski topology on $\X$, which consists of clopen sets.
If $U$ is a clopen subset of $\X$, then its characteristic function
${\c}_U$ (which is $1$ on $U$ and $0$ on $\X\setminus U$) is trivially an
element of $C_K(\X)$.
If $U=D_e$ for some  $e\in B$, we will denote ${\c}_U$ simply by ${\c}_e$.
Since $\X$, endowed with the Zariski topology, is compact,
a function $\f:\X\to K$ is continuous if and only
if there exist finitely many pairwise orthogonal idempotents $e_i\in B$
with $\wii e_i=1$ such that $\f$ is equal to a linear combination
of the form $\Sii a_i{\c}_{e_i}$ with all $a_i\in K$.
We may, and will, assume without loss of generality that in all such
linear combinations $a_0=0$, $a_1,\dots,a_{n-1}\not=0$, and
$e_1,\dots,e_{n-1}\not=0$; that is,
$D_{e_0}$ is the subset $\f^{-1}(0)$ of $\X$ (which may be the empty set),
while $\bigcup_{i=1}^{n-1} D_{e_i}=\f^{-1}(K\setminus \{0\})$ is the {\it
support} of $\f$, where $D_{e_1},\dots,D_{e_{n-1}}\not=\emptyset$.
It is proved in \cite{as1} that $C_K(\X)$ is a B\'ezout ring with divisibility
theory isomorphic to $B$.

Now we are in a position to construct our B\'ezout ring $R$ with divisibility
theory $S$.
Let $I$ be the ideal of $C_K(\X)$ generated by all elements
${\c}_e$ with $e\in P$.
Hence, $I$ consists of all linear combinations $\sum_{i=0}^{n-1}a_i\c_{e_i}$
where $e_0,\dots,e_{n-1}$ are pairwise orthogonal idempotents 
such that $\wii e_i=1$, $e_1,\dots,e_{n-1}$ are nonzero idempotents
in $P$, and for the coefficients
$a_0,a_1,\dots,a_{n-1}$ we have that $a_0=0$ and
$0\not=a_i\in K$ for all $i>0$ ($i\le n-1$).
Clearly, $I$ is a regular ring without $1$.
It is also easy to see that
the idempotent elements of $I$ are exactly the elements $\c_e$ with $e\in P$.
Furthermore, the principal ideals of $I$ are the ideals
$\c_eI=\c_eC_K(\X)$ with $e\in P$, so
they form a semigroup order-isomorphic to $P$.

Next we want to define an action of $H$ on $I$.
Since $K$ is the field of fractions of $H/Q$, we see that
for each $h\in H$ the element
$\hat{h}=h+Q\,(\in H/Q)$ is in $K$.
Hence the constant function $\hat{h}\c_1$,
where $\c_1=\c_{\X}$ is the constant function with value $1$,
is in the ring $C_K(\X)$. Clearly, 
multiplication by $\hat{h}\c_1$ is an $I$-module endomorphism of $I$,
which is the zero endomorphism if $h\in Q$,
and is an automorphism if $h\in H\setminus Q$,
because in that case $\hat{h}\in K\setminus\{0\}$.
It follows, in particular, that multiplication by $\hat{h}\c_1$ ($h\in H$)
maps every ideal of $I$ into itself.
For each element $h\in H$ we define the action of $h$ on $I$ by
\[
hi:=(\hat{h}\c_1)i\quad \text{for all $i\in I$}.
\]

Our ring $R$ is now defined, as in \cite[Example~3.2]{vas}, on
the product set $H\times I$ with componentwise addition and with multiplication
\begin{equation*}
  (h_1, i_1)(h_2, i_2)=(h_1h_2, h_1i_2+h_2i_1+i_1i_2)
\end{equation*}  
where $h_1i_2$ and $h_2i_1$ are obtained by the action of $H$ on $I$
defined above.
Clearly, $H\to R$, $h\mapsto(h,0)$ is an embedding of the (unital)
ring $H$ in $R$. It is easy to see that $R$ is a commutative
unital ring with unity $1=(1,0)$.
We also have an embedding of the (non-unital) ring $I$ in $R$, namely
$I\to R$, $i\mapsto (0,i)$.
If there is no danger of confusion, we may write $h$ in place of $(h,0)$
and $i$ in place of $(0,i)$.

First we determine the idempotents in $R$.
Assume that $r=(h, g)\in R$ is idempotent.
Then the equality
$(h^2, 2hg+g^2)=r^2=r=(h, g)$ implies that
$h^2=h$, so $h\in \{0, 1\}$ (since $H$ is an
integral domain). Hence,
either $h=0$ and $g^2=g$, or $h=1$ and $g^2=(-g)^2=-g$.
Equivalently, either $r=(0,g)$ is an idempotent in $I$, or
$r=(1,g)=(1,0)-(0,-g)=1-(0,-g)$ where $(0,-g)$ is an idempotent in $I$.

Next we determine the principal ideals of $R$.
Let $r=(h,g)$ be an arbitrary element of $R$, so $h\in H$ and $g\in I$.
Since the divisibility theory of the B\'ezout domain $H$ is isomorphic to
$C\cup \{0\}$, 
there exist $c\in C\cup\{0\}$ and a unit
$u\in H$ such that $uh=c$.
Therefore $rR=urR=(c, ug)R$, so we may assume from now on that
$r=(c, g)$ with  $c\in C\cup\{0\}=S_1\cup S_a\cup\{0\}$ and $g\in I$.
By the description of the principal ideals of $I$ discussed earlier,
there exists 
$e\in P$ such that $gI=\c_eI=\c_eR$.
Consequently, there exist $x, y\in I$ with $xg={\c}_e$ and $g=y{\c}_e$.
If $c\in S_a\cup\{0\}\,(\subseteq Q)$, then $ci=0$ for all $i\in I$, so
both of the elements
$rx=(c,g)(0,x)=(0,gx)=gx=\c_e$ and $\c_ey=g=(0,g)$ are in the ideal $rR$.
Therefore the element $c=(c,0)=(c,g)-(0,g)$, too, belongs to $rR$,
which implies that
$rR=cR\oplus gR$. Similarly, by switching the roles of $g$ and $\c_e$
(along with those of $x$ and $y$), we get that $(c,\c_e)R=cR\oplus \c_eR$.
But $gR=gI=\c_eI=\c_eR$, therefore we conclude that
\begin{equation}
  \label{eq-directsum}
  rR=(c,\c_e)R=cR\oplus \c_eR=cR\oplus gR
  \text{ for some $e\in P$,\quad if $c\in S_a\cup\{0\}$}.
\end{equation}

Now let $c\in S_1$. By the definition of $I$, we have
$g=\sum_{j=0}^{n-1} a_j{\c}_{e_j}$ where $e_0,\dots,e_{n-1}$ are 
pairwise orthogonal idempotents with $\bigwedge_{j=0}^{n-1} e_j=1$,
$0\not=e_1,\dots,e_{n-1}\in P$,
and the coefficients
$a_0,\dots,a_{n-1}$ satisfy the following conditions:
$a_0=0$ and $0\neq a_j\in K$ for every $j>0$ ($j\le n-1$).
Let $e=\bigwedge_{j=1}^{n-1} e_j$. Clearly, $e\in P$.
Furthermore, if $g=0$, then $n=1$, $e_0=1$, $a_0=0$, and $e=0$, while if
$g\not=0$, then we must have $n>1$ and $e\not=0$.
Our aim is to prove that $rR=c(1,-\c_{e'})R$ for some $e'\in P$.
This is clearly true if $g=0$, because then $r=c=c(1,0)=c(1,-\c_0)$.
Therefore we will assume from now on that $g\not=0$, and hence $n>1$.
Notice first that 
for every subscript $\ell>0$ ($\ell\le n-1$),
\begin{equation}
  \label{eq1}
  rR\ni
  r\c_{e_\ell}=(c,g)(0,\c_{e_\ell})
  =\biggl(c,\sum_{j=0}^{n-1}a_j\c_{e_j}\biggr)(0,\c_{e_\ell})
  =\bigl(0,(a_\ell+c)\c_{e_\ell}\bigr)
  =(a_\ell+c)\c_{e_\ell}.
\end{equation}
Let $b_\ell=a_\ell+c$ for each $\ell>0$ ($\ell\le n-1$), and let
\begin{equation*}
  e_{=}=\bigwedge_{\substack{1\le j\le n-1\\b_j=0}}e_j
  \qquad\text{and}\qquad
  e_{\neq}=\bigwedge_{\substack{1\le j\le n-1\\b_j\not=0}}e_j.
\end{equation*}
Then $e_{=},e_{\neq}\in P$, $e_{=}e_{\neq}=0$, and $e_{=}\wedge e_{\neq}=e$.
Hence,
\begin{equation*}
\c_{e_{=}}=\sum_{\substack{1\le j\le n-1\\b_j=0}}\c_{e_j},\quad
\c_{e_{\neq}}=\sum_{\substack{1\le j\le n-1\\b_j\not=0}}\c_{e_j},\quad 
\c_{e_{=}}\c_{e_{\neq}}=\c_0=0,\quad
\text{and}\quad
\c_{e_{=}}+\c_{e_{\neq}}=\c_e.
\end{equation*}
  Let $i=\sum_{j=1}^{n-1}b_j\c_{e_j}=\sum_{j,\,b_j\not=0}b_j\c_{e_j}$.
For every $j$ with $b_j\not=0$ we have that
$\c_{e_j}I=b_j\c_{e_j}I$, therefore $\c_{e_{\neq}}R=\c_{e_{\neq}}I=iI=iR$. 
By \eqref{eq1},  
we also have that $i\in rR$, and hence
\begin{multline*}
  \qquad
  rR\ni
  r-i
  = \biggl(c,\sum_{j=0}^{n-1}a_j\c_{e_j}\biggr)-
  \biggl(0,\sum_{j=1}^{n-1}(a_j+c)\c_{e_j}\biggr)
  = \biggl(c,\sum_{j=1}^{n-1}(-c)\c_{e_j}\biggr)\\
  =c-c\c_e=c(1,-\c_e).
  \qquad
\end{multline*}  
Thus, both principal ideals $\c_{e_{\neq}}R=iR$ and
$c(1,-\c_e)R$ are contained in $rR$.
Their product is $\{0\}$, because
$\c_{e_{\neq}}(1,-\c_{e})=(0,\c_{e_{\neq}})(1,-\c_{e})=(0,\c_{e_{\neq}}-\c_{e}\c_{e_{\neq}})
=(0,0)=0$, and their sum $c(1,-\c_e)R+\c_{e_{\neq}}R=c(1,-\c_e)R+iR$ is $rR$,
because $c(1,-\c_e)+i=r$. This proves that $rR=c(1,-\c_e)R\oplus\c_{e_{\neq}}R$.
Finally, we claim that
\begin{equation*}
  rR=c(1,-\c_e)R\oplus\c_{e_{\neq}}R=c(1,-\c_{e_{=}})R.
\end{equation*}
For the proof of the second equality,
the inclusion $\subseteq$ follows by observing that
$c(1,-\c_e)=c(1,-\c_{e_{=}})(1,-\c_e)$ and
$\c_{e_{\neq}}=c(1,-\c_{e_{=}})(c^{-1}\c_{e_{\neq}})$, while the
reverse inclusion $\supseteq$ is a consequence of the equality
$c(1,-\c_{e_{=}})=c(1,-\c_e)+c\c_{e_{\neq}}$.

To summarize our discussion in the last two paragraphs, we have proved that 
the set of principal ideals of $R$ is
\begin{equation}
  \label{eq_principIdeals}
\mathcal{I}_{\textrm{princ}}(R)=  
\{c(1,-\c_e)R: c\in S_1,e\in P\}\cup \{(d,\c_e )R : d\in S_a\cup\{0\},e\in P\}.
\end{equation}
Next we will show that $R$ is a B\'ezout ring, that is, every
finitely generated ideal of $R$ is principal.
It suffices to prove that for any two principal ideals $J_1,J_2$ of $R$,
the ideal $J=J_1+J_2$ is principal.
We will use the description of $\mathcal{I}_{\textrm{princ}}(R)$
in \eqref{eq_principIdeals},
and will distinguish three cases according to which of the two subsets
of $\mathcal{I}_{\textrm{princ}}(R)$ indicated in
\eqref{eq_principIdeals} $J_1$ and $J_2$ belong to.

Assume first that both ideals $J_i$ ($i=1,2$)
have the form $J_i=c_i(1,-\c_{e_i})R$ where $c_i\in S_1$ and $e_i\in P$.
Then $c_1(1,-\c_{e_1})\c_{e_2}=c_1(\c_{e_2}-\c_{e_1e_2})\in J_1\cap I$, so since
the action of $c_1\in S_1$ on $I$ is an $I$-module automorphism which maps
every ideal of $I$ into itself, we get that $\c_{e_2}-\c_{e_1e_2}\in J_1$.
Hence,
$c_2(1, -\c_{e_1e_2})=c_2(\c_{e_2}-\c_{e_1e_2})+c_2(1,-\c_{e_2})\in J_1+J_2=J$.
By switching the roles of $J_1$ and $J_2$, we get similarly that
$c_1(1, -\c_{e_1e_2})\in J$.
We want to show that for the element $c=c_1\wedge c_2$, which
belongs to $S_1$
by Proposition~\ref{prop-bezout2}(2), we also have
$c(1, -\c_{e_1e_2})\in J$.
If $c_1=c_2$, then $c=c_1=c_2$, so our claim $c(1, -\c_{e_1e_2})\in J$
trivially follows from $c_i(1, -\c_{e_1e_2})\in J$ ($i=1,2$).
So, assume from now on that $c_1\neq c_2$. 
Using the hypernormality of $S$
(see Definition~\ref{basic}(4)) we get that there exist 
elements $w_1, w_2\in S$ such that $c_1=cw_1, c_2=cw_2$, and
$w_1\wedge w_2=1$.
The equalities $c_1=cw_1$ and $c_2=cw_2$ imply that
$c, w_1, w_2$ are all in $S_1$.
Since $S_1$ is cancellative (see Proposition~\ref{prop-bezout2}(2))
and $c_1\neq c_2$, we get that $w_1\neq w_2$.
Hence, by the definition of $H$, the element
$h=w_1+w_2\in H$ has an inverse in $H$.
Therefore, the following calculation shows that $c(1,-\c_{e_1e_2})\in J$:
\[
c(1,-\c_{e_1e_2})=h^{-1}hc(1,-\c_{e_1e_2})
=h^{-1}\bigl(c_1(1,-\c_{e_1e_2}) + c_2(1,-\c_{e_1e_2})\bigr)\in J.
\]
In fact, we also have that $J=c(1,-\c_{e_1e_2})R$, 
because the equality
$c_i(1,-\c_{e_i})=w_ic(1,-\c_{e_1e_2})(1, -\c_{e_i})$ implies that
$J_i\subseteq c(1,-\c_{e_1e_2})R$ for $i=1,2$.
This finishes the proof that $J$ is a principal ideal in this case.

Next, let us assume that $J_1=c_1(1,-\c_{e_1})R$ and
$J_2=(d_2, \c_{e_2})R$ where $c_1\in S_1$, $d_2\in S_a\cup\{0\}$, and
$e_1,e_2\in P$.
Recall from \eqref{eq-directsum} that $J_2=d_2R\oplus \c_{e_2}R$, and hence
$d_2=(d_2,0)\in J_2$ and $\c_{e_2}\in J_2$.
Let $e$ be the relative complement of $e_1e_2$ in $e_1$; that is, let
$e\in B$ be such that $e_1e_2\wedge e=e_1$ and $e_1e_2\vee e=0$.
As before, let $c=c_1\wedge d_2$, and
notice that $c\in S_1$, because by Proposition~\ref{prop-bezout2}(1),
$c^\perp=c_1^\perp\cap d_2^\perp=\{0\}\cap a^\perp=\{0\}$.
Furthermore, let us use the hypernormality of
the B\'ezout monoid $C\cup\{0\}$
to obtain elements $w_1,w_2\in C\cup\{0\}$ such that $c_1=cw_1$, $d_2=cw_2$, and
$w_1\wedge w_2=1$. 
We note that if $d_2\in S_a$, then
$w_2\in S_a$, because the equality $d_2=cw_2$ with
$d_2\in S_a$, $c\in S_1$ implies that $w_2\notin S_1\cup\{0\}$,
hence it must be that $w_2\in S_a$.
If, in turn, $d_2=0$, then the equality $0=d_2=cw_2$ with
$c\in S_1$ forces $w_2=0$.
Now let $h=w_1+w_2\in H$.
If $d_2=0$, and hence $w_2=0$, then $w_1\wedge w_2=1$ implies that $w_1=1$,
so $h=1$.
If, in turn, $d_2\in S_a$, then $w_1\in S_1$, $w_2\in S_a$ imply
that $w_1,w_2$ are distinct elements of $C$.
Therefore
the definition of $H$ yields that in both cases
$h$ has an inverse in $H$.
Therefore, the following calculation shows that $c(1,-\c_{e_1})\in J$:
\begin{multline*}
c(1, -\c_{e_1})
=h^{-1}hc(1, -\c_{e_1})
=h^{-1}\bigl(c_1(1,-\c_{e_1})+d_2(1,-\c_{e_1})\bigr)\\
=h^{-1}\bigl(c_1(1,-\c_{e_1})+(d_2,0)\bigr)
\in J_1+J_2=J,
\end{multline*}
where the third equality holds, because $d_2\in S_a$ acts on $I$ by the zero
map, and hence $d_2\c_{e_1}=0$.
Thus,
\[
c(1,-\c_e)=c(1, -\c_{e_1})+c\c_{e_1e_2}=c(1, -\c_{e_1})+\c_{e_2}c\c_{e_1}
\in J+J_2=J,
\]
which implies that
$c(1,-\c_e)R\subseteq J$.
To prove that $J=c(1,-\c_e)R$
we need to establish that $J_1,J_2\subseteq c(1,-\c_e)R$.
For $J_1$, this follows from the equality
$c_1(1, -\c_{e_1})=c(1, -\c_e)d_1(1,-\c_{e_1})$.
For $J_2=d_2R\oplus\c_{e_2}R$, the desired conclusion follows
from the equalities
$\c_{e_2}=c(1, -\c_e)(c^{-1}\c_{e_2})$ ($c^{-1}\in K$)
and
$d_2=w_2c=(w_2c,0)=\bigl(w_2c,w_2c(-\c_e)\bigr)=w_2c(1,-\c_e)$,
where the third $=$ is valid, since $w_2\in S_a$ acts on $I$ by the zero map,
forcing $w_2\c_e=0$.
This proves that $J$ is a principal ideal in this case as well.

Finally, assume that both ideals $J_i$ ($i=1,2$) have the form
$J_i=(d_i, \c_{e_i})R=d_iR\oplus \c_{e_1}R$ where $d_1,d_2\in S_a\cup\{0\}$
and $e_1,e_2\in P$.
Then it is easy to see that
\[
J=(d_1R+d_2R)\oplus(\c_{e_1}R+\c_{e_2}R)
=(d_1\wedge d_2)R\oplus\c_{e_1\vee e_2}R
=(d_1\wedge d_2,\c_{e_1\vee e_2})R,
\]
where $d_1\wedge d_2\in S_a\cup\{0\}$, so
$J$ is a principal ideal in this final case as well.
Thus, the proof that $R$ is a B\'ezout ring is now complete.

Finally, we prove that the divisibility theory of $R$, i.e.,
the B\'ezout monoid $\mathcal{I}_{\textrm{princ}}(R)$
with ideal multiplication, is isomorphic to $S$.
We will use the description of $\mathcal{I}_{\textrm{princ}}(R)$
in \eqref{eq_principIdeals} and the description of $S$ in
Proposition~\ref{prop-vas}(4)--(5).
Statements (4) and (1) of Proposition~\ref{prop-vas},
combined with \eqref{eq_principIdeals}, 
imply that the following assignment yields a well-defined and surjective
function $S\to \mathcal{I}_{\textrm{princ}}(R)$:
\begin{align*}
  \Phi\colon S & {}\to \mathcal{I}_{\textrm{princ}}(R),\\
  cf & {}\mapsto c(1,-\c_e)R,\ \ \text{if $c\in S_1$ and
    $f$ is the orthogonal complement of $e\in P$},\\
  d\wedge e & {}\mapsto (d,\c_e)R, \kern22pt 
    \text{if $d\in S_a\cup\{0\}$ and $e\in P$}\hfill. 
\end{align*}
To see that $\Phi$ is also injective, we need to argue that the
principal ideals shown in \eqref{eq_principIdeals} with different generators
are different ideals. This follows from the following observations:
\begin{itemize}
\item
  If $c\in S_1$ and $e\in P$, then $c(1,-\c_e)R+I=cH+I$, and
  the annihilator ideal of
  $c(1,-\c_e)$ in $R$ is $\c_eR$.
\item
  If $d\in S_a\cup\{0\}$ and $e\in P$, then
  $(d,\c_e)R=dR\oplus\c_eR$ (see \eqref{eq-directsum}), therefore
  $(d,\c_e)R\cap H=dH$ and $(d,\c_e)R\cap I=\c_eH$.
\end{itemize}  
It remains to show that $\Phi$ is a monoid isomorphism, that is,
$\Phi(s_1s_2)=\Phi(s_1)\Phi(s_2)$ holds for all $s_1,s_2\in S$.
Checking this is fairly straightforward, by distinguishing
three cases according to which of the two
disjoint subsets $S_1(B\setminus P)$ and $(S_a\cup\{0\})\wedge P$
of $S$ (see Proposition~\ref{prop-vas}(4))
the elements $s_1,s_2$ belong to. In Case~1 we will assume that  
$s_1,s_2\in S_1(B\setminus P)$, that is,
both elements $s_i$ ($i=1,2$) have the form $s_i=c_if_i$
where $c_i\in S_1$, $f_i\in B\setminus P$, so for the orthogonal
complement $e_i$ of $f_i$ we have $e_i\in P$;
in Case~2, $s_1,s_2\in (S_a\cup\{0\})\wedge P$, that is,
both elements $s_i$ ($i=1,2$) have the form $s_i=d_i\wedge e_i$
where $d_i\in S_a\cup\{0\}$ and $e_i\in P$; finally, in Case~3,
$s_1\in S_1(B\setminus P)$ and $s_2\in (S_a\cup\{0\})\wedge P$, that is,
$s_1=c_1f_1$ and $s_2=d_2\wedge e_2$ where
$c_1\in S_1$, $d_2\in S_a\cup\{0\}$, 
$f_1\in B\setminus P$, and $e_2\in P$, so for the orthogonal
complement $e_1$ of $f_1$ we have $e_1\in P$.
The following calculations prove the equality
$\Phi(s_1s_2)=\Phi(s_1)\Phi(s_2)$ in all three cases.
In each case the first and second to last
equalities follow from the definition of $\Phi$, while
the last equality follows from Proposition~\ref{prop-vas}(5).
So, in Case~1, we have
\begin{multline*}
\Phi(c_1f_1)\Phi(c_2f_2)=c_1(1,-\c_{e_1})Rc_2(1,-\c_{e_2})R
=c_1c_2(1,-\c_{e_1\wedge e_2})R\\
=\Phi\bigl(c_1c_2(f_1\vee f_2)\bigr)
=\Phi\bigl((c_1f_1)(c_2f_2)\bigr).
\end{multline*}
In Case~2, we have $d_1\c_{e_2}=0=d_2\c_{e_1}$,
because $d_1,d_2\in S_a\cup\{0\}$ act on $I$
by the zero map. Therefore,
\begin{multline*}
\Phi(d_1\wedge e_1)\Phi(d_2\wedge e_2)=(d_1,\c_{e_1})R(d_2,\c_{e_2})R
=(d_1d_2,d_1\c_{e_2}+d_2\c_{e_1}+\c_{e_1}\c_{e_2})R\\
=(d_1d_2,\c_{e_1\vee e_2})R
=\Phi\bigl(d_1d_2\wedge(e_1\vee e_2)\bigr)
=\Phi\bigl((d_1\wedge e_1)(d_2\wedge e_2)\bigr).
\end{multline*}
Finally, in Case~3, we have $d_2\c_{e_1}=0$ for the same reason as in the
preceding case. Hence, if $e$ denotes the relative complement of
$e_1e_2$ in $e_2$, then
\begin{multline*}
\Phi(c_1f_1)\Phi(d_2\wedge e_2)=c_1(1,-\c_{e_1})R(d_2,\c_{e_2})R\\
=c_1(d_2,\c_{e_2}-d_2\c_{e_1}-\c_{e_1}\c_{e_2})R
=c_1(d_2,\c_{e_2}-\c_{e_1}\c_{e_2})R=c_1(d_2,\c_{e})R\\
=c_1(d_2R\oplus \c_eR)=c_1d_2R\oplus c_1\c_eR=
c_1d_2R\oplus \c_eR
=(c_1d_2,\c_{e})R\\
=\Phi\bigl(c_1d_2\wedge e\bigr)
=\Phi\bigl((c_1f_1)(d_2\wedge e_2)\bigr),
\end{multline*}
where the first and fourth $=$'s on line~3 are based on \eqref{eq-directsum}
(and also on the fact that $d_2,c_1d_2\in S_a\cup\{0\}$), and
the third equality on line~3
holds, because $c_1\in S_1$ acts on $I$ by an $I$-module
automorphism which maps every ideal of $I$ into itself. 

This completes the proof of Theorem~\ref{vas2}.
\end{proof}

\bibliographystyle{plain}

\end{document}